\newtheorem{Theorem}{Theorem}
\newtheorem{Remark}{Remark}
\def\o{\Omega}
\def\h{\widehat}
\def\R{\mathbf{R}}
\def\h{\mathbf{H}}
\def\calO{{\cal O}}
\newcommand{\dif}{\mathrm{d}}
\newcommand{\di}{\mathrm{div}\,}
\newcommand{\p}{\partial}
\def\C{\mathrm{curl}\,}
\begin{document}
\title{Approximation of 2D Euler Equations by the Second-Grade Fluid Equations with Dirichlet Boundary Conditions}


\author{  Milton C. Lopes Filho$^{\lowercase{a}}$,  Helena J. Nussenzveig Lopes$^{\lowercase{a}}$, \\ Edriss S. Titi$^b$,  Aibin Zang$^{\lowercase{d,a}}$}

\date{}

\maketitle

\begin{center}
$^a$Universidade Federal do Rio de Janeiro, Av. Athos da Silveira Ramos, 149\\Ilha do Fund\~ao, Rio de Janeiro, RJ 21941-909, BRAZIL,\\
$^b$ Dept. of Mathematics, Texas A\&M University, 3368 TAMU\\
College Station, TX 77843-3368,  USA \\ and \\
Dept. of Computer Science and Applied Mathematics, Weizmann Institute
of Science, Rehovot 76100, Israel, \\
$^d$ Dept. of Mathematics, Yichun University, Yichun, Jiangxi, 336000, P. R.China.\\
\end{center}

\begin{abstract}
The second-grade fluid equations are a model for viscoelastic fluids, with two parameters: $\alpha > 0$, corresponding to the elastic response, and  $\nu > 0$, corresponding to viscosity. Formally setting these parameters to $0$ reduces the equations to the incompressible Euler equations of ideal fluid flow. In this article we study the limits $\alpha, \nu \to 0$ of solutions of the second-grade fluid system,
in a smooth, bounded, two-dimensional domain with no-slip boundary conditions. This class of problems interpolates between the Euler-$\alpha$ model ($\nu = 0$), for which the authors recently proved convergence to the solution of the incompressible Euler equations, 
and the Navier-Stokes case ($\alpha = 0$), for which the vanishing viscosity limit is an important open problem. We prove three results. First, we establish convergence of the solutions of the second-grade model to those of the Euler equations provided $\nu = \mathcal{O}(\alpha^2)$, as $\alpha \to 0$, extending the main result in  \cite{lntz}. 
Second, we prove equivalence between convergence (of the second-grade fluid equations to the Euler equations) and vanishing of the energy dissipation in a suitably thin region near the boundary, in the asymptotic  regime $\nu = \mathcal{O}(\alpha^{6/5})$, $\nu/\alpha^2 \to \infty$ as $\alpha \to 0$. This amounts to a convergence criterion similar to the well-known Kato criterion for the vanishing viscosity limit of the Navier-Stokes equations to the Euler equations.
Finally, we obtain an extension of Kato's classical criterion to the second-grade fluid model, valid if $\alpha = \mathcal{O}(\nu^{3/2})$, as $\nu \to 0$. The proof of all these results relies on energy estimates and boundary correctors, following the original idea by Kato.

\textbf{Keywords}: Second-grade complex fluid; Euler equations;  boundary  layer; vanishing
viscosity limit.

\textbf{Mathematics Subject Classification(2000)}: 35Q30; 76D05, 76D10.
\end{abstract}
\numberwithin{equation}{section}

\numberwithin{equation}{section}

%
\section{Introduction}

%
%

The second-grade fluid model is governed by the system:
\begin{equation} \label{eq1}
\left\{
\begin{array}{cl}
\partial_t v-\nu\Delta u+( u\cdot\nabla)v+\sum_{j=1}^2v_j\nabla u_j+\nabla p=0, &\mbox{in}~~\Omega\times (0, T) \\[3mm]
\nabla\cdot u= 0, &\mbox{in}~~\Omega\times (0, T)\\[2mm]
v=(I-\alpha^2\Delta)u, &\mbox{in}~~\Omega\times (0, T),
\end{array}
\right.
\end{equation}
see, e.g., \cite{ce,Galdi}.

We consider system \eqref{eq1} in a two-dimensional simply-connected smooth bounded domain $\o\subset\mathbb{R}^2$,  subject to the no-slip Dirichlet boundary condition on $\p\o$, i.e.,
\begin{equation}\label{bdry1}
u=0,~\mbox{on}~\p\o\times (0,T).
\end{equation}

Formally, if we set $\alpha = 0$, system \eqref{eq1} becomes the Navier-Stokes system, because
the term $\sum_{j=1}^{2} v_j \nabla u_j$ becomes a gradient and it can be incorporated into the pressure. On the other hand, if we set $\nu=0$ instead, system \eqref{eq1} becomes the Euler-$\alpha$ system and
setting both $\alpha$ and $\nu$ to zero yields the incompressible Euler equations, which we write as:

\begin{equation} \label{eu}
\left\{\begin{aligned} \partial_t  \bar{u}+ \bar{u}\cdot\nabla \bar{u}+\nabla\bar{ p}=0, ~&\mbox{in}~~\Omega\times (0, T) \\[3mm]
\nabla\cdot  \bar{u}= 0, ~&\mbox{in}~~\Omega\times (0, T). \\[2mm]
\end{aligned}\right.
\end{equation}
In this article,  the Euler system, \eqref{eu}, is subject to the non-penetration boundary condition,
\begin{equation}\label{bdry2}
\bar{u}\cdot\hat{n}=0,~ \mbox{on} ~\p\Omega\times (0,T),
\end{equation}
  where $\hat{n}$ denotes the exterior unit normal vector to $\p\o.$

In a recent paper, \cite{lntz}, the authors proved that, under suitable smoothness assumptions, solutions of the Euler-$\alpha$ system converge to solutions of the Euler system as $\alpha \to 0$, despite the presence of a boundary layer. The analogous problem for the $\nu \to 0$ limit of the Navier-Stokes equations is an important open problem.  As we have seen, the second-grade fluids equation provides a natural family of problems which formally interpolates between these situations, aside from having independent interest, see \cite{ce}. The purpose of the present article is to examine the limit $\alpha,\nu \to 0$ of the second-grade fluids equations, in the hope of shedding light into the contrast between the vanishing $\alpha$ limit of Euler-$\alpha$ and the vanishing viscosity limit of the Navier-Stokes system in the presence of a boundary layer.

Our investigation of the limit $\alpha,\nu \to 0$ of solutions to \eqref{eq1} is expressed in three different results. First, we prove that if $\nu = \mathcal{O}(\alpha^2)$, under appropriate conditions on regularity and convergence of initial data, solutions of the second-grade fluid equations converge to solutions of the Euler system in $L^2$ in space, uniformly in time, as $\alpha\to 0$. This result is a natural extension of the main result in \cite{lntz}, and the condition $\nu = \mathcal{O}(\alpha^2)$ can be interpreted as a smallness condition on $\nu$, implying that the second-grade fluid equations behave as a small perturbation of the Euler-$\alpha$ system. The other results are Kato-type  criteria (cf. \cite{kato}) for convergence. First, we prove that, if $\alpha^2 << \nu = \mathcal{O}(\alpha^{6/5})$, convergence is equivalent to vanishing of the energy dissipation rate in a region near the boundary of width $\mathcal{O}(\alpha^3 \nu^{-3/2})$. Note that, with the condition imposed on $\nu$, this width is known to vanish as $\alpha \to 0$, but no rate can be asserted. In this result, the second-grade fluid equations are still a small perturbation of the Euler-$\alpha$ model, but the perturbation is larger, so that convergence is lost and only the sharp characterization of convergence is retained. For the last result, we assume $\alpha = \mathcal{O}(\nu^{3/2})$. In this case, we prove that convergence is equivalent to vanishing of the energy dissipation rate in a region of width $\mathcal{O}(\nu)$ near the boundary, which is precisely the result obtained by Kato for the Navier-Stokes system in \cite{kato}.  In contrast with the first two results, this last result imposes a smallness condition on $\alpha$, which can be interpreted as thinking of the second-grade fluids system as a small perturbation of the Navier-Stokes system. The result proved is, therefore, a natural extension of the original result by Kato \cite{kato}. We illustrate the regions of validity of the three results in Figure 1 below,
\begin{figure}[htbp]
	\centering
	\includegraphics[height=2.5in, width=2.5in]{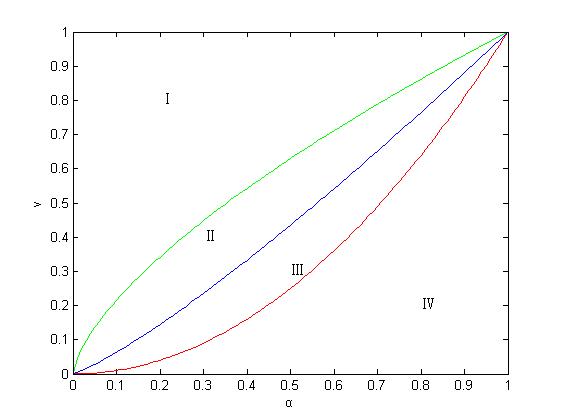}
	\caption{ \small Curve between region I and region II: $\nu=\alpha^{2/3}$; between II and III: $\nu=\alpha^{6/5}$; between III and IV: $\nu=\alpha^2$.}\label{fig:1}
\end{figure}
convergence in region IV, sharp convergence criteria in regions I and III, and no result obtained in region II.
The proofs of all three results are technically very similar, based on the use of energy estimates and boundary correctors, following the ideas introduced by Kato in \cite{kato}.

There is a large literature associated with the vanishing viscosity and vanishing $\alpha$ limits, which we briefly survey below. For the vanishing viscosity limit of the Navier-Stokes equations, convergence is known in cases without boundary, see  for example, \cite{Constantin1,Constantin2,ConstantinAMS,mas} and references therein, and for Navier boundary conditions, see \cite{CMR1, FLP, Xiao-Xin, mas2012, WXZ}. For no-slip boundary conditions the problem is open, with sharp convergence criteria obtained first by Kato in \cite{kato}, and reformulations of Kato's criteria obtained in \cite{kelliher,w}. For a recent survey on this subject, see \cite{Bardos-Titi}.
The literature associated with the $\alpha \to 0$ problem is much smaller. In addition to the convergence result in \cite{lntz}, already mentioned, it was shown in \cite{lt} that, in the whole space, solutions of Euler-$\alpha$  converge to the corresponding solutions  of the Euler equations, as $\alpha\to 0$. In \cite{BILN}, the limit $\alpha, \nu \to 0$ of the solutions of \eqref{eq1} with Navier-type boundary conditions was shown to converge to the corresponding solutions of the Euler equations, irrespective of the relative vanishing rates of $\alpha$ and $\nu$.

The remainder of this paper is organized in two sections. In section 2, we  introduce some basic notation, and present preliminary results.   In section 3, we state and prove our main results and draw some conclusions.

\section{Notations and preliminaries}

In this section, we introduce notation and present preliminary results concerning the second-grade fluid equations \eqref{eq1} and the Euler equations \eqref{eu} .

Let $\o \subseteq \R^2$ be a bounded, smooth, simply connected domain. We use the notation $H^{m}(\o)$  for the usual  $L^2$-based Sobolev spaces of order $m$, with the norm $\|\cdot\|_m$.  For the case $m=0$, $H^0(\o)=L^2(\o)$; we denote both norms by $\|\cdot\|$. By $\mathbf{H}^m(\o)$ we denote the Sobolev space of the vector fields $u=(u_1,u_2)$ such that $u_i\in H^m(\o)$, $i=1,2,$ and the norms in $\mathbf{H}^m(\o)$, $ (L^2(\o))^2$  are also denoted by $\|\cdot\|_m$, $\|\cdot\|$, respectively.
We denote by $C^{\infty}_c(\o)$ the space of smooth functions with infinitely many derivatives, compactly supported in $\o$, and  by $H^m_0(\o)$ the closure of $C^{\infty}_c(\o)$ under the $H^m$-norm.

We make use of the following function spaces.
\begin{equation*}
\begin{aligned}
&H=\{u\in (L^2(\Omega))^2: \di u=0~\mbox{in}~\Omega,~ u\cdot\hat{n}=0~\mbox{on}~\p\Omega\},\\
& V=\{u\in \mathbf{H}^1(\o):\di u=0~\mbox{in}~\Omega,~ u=0~\mbox{on}~\p\Omega\},\\
&W=\{u\in V: \C(u-\alpha^2 \Delta u)\in L^2(\o)\}.
\end{aligned}
\end{equation*}

 We will make frequent use of the identity:
\begin{equation}\label{zeroint}
\int_\o(\Psi\cdot\nabla) \Phi\cdot\Phi\dif x=0,
\end{equation}
 $~\mbox{for every}~\Psi\in \mathbf{H}^1(\o),~\mbox{with}~\Psi\cdot\hat{n}=0, \di \Psi=0, ~\mbox{and every}~\Phi\in \h^1(\o).$

We recall the two-dimensional   Ladyzhenskaya inequality (see, e.g.,\cite{cf, la}),
\begin{equation}\label{la}
\|\psi\|_{L^4(\o)}^2\le C\|\psi\|\|\psi\|_1,~\mbox{for every}~ \psi\in (H^1(\o))^2,
\end{equation}
where $C$ is a positive constant.

 Let $u =(u_1,u_2) \in V$, and set
\[\C u \equiv \partial_{x_1} u_2 - \partial_{x_2} u_1 = \nabla^{\perp} \cdot u,\] where
$\nabla^{\perp} = (-\partial_{x_2},\partial_{x_1})$. We   observe that
\begin{equation}\label{normu}
\|\C u\|=\|\nabla u\|, ~\mbox{for every} ~ u\in V.
\end{equation}

   Apply the $\C$  operator to the second-grade fluid equations \eqref{eq1} under the no-slip boundary conditions \eqref{bdry1} to  obtain the following  equivalent  two-dimensional  system,
\begin{equation}\label{eq2}
\left\{\begin{array}{cl} \partial_t q+\displaystyle{\frac{\nu}{\alpha^2}}(q-\C u)+u\cdot\nabla q=0 , &\mbox{in}~~\Omega\times (0, T) ,\\[3mm]
\nabla\cdot u= 0, &\mbox{in}~~\Omega\times (0, T), \\[2mm]
q=\C (u-\alpha^2\Delta u)&\mbox{in}~~\Omega\times (0, T), \\[2mm]
u=0&\mbox{on}~~\p\Omega\times (0,T).
\end{array}\right.
\end{equation}

Global well-posedness of  \eqref{eq1}, or equivalently, of (\ref{eq2}), has been established  in \cite{ce,Galdi}.  For the sake of completeness we state and prove the following:

\begin{Theorem}\label{2ndgradeth}
Let  $T>0$ be fixed, and $u^\alpha_0\in W.$ There exists a unique solution $u\in C([0,T]; V\cap W)$ of  problem (\ref{eq1})--\eqref{bdry1} (or (\ref{eq2})) with initial velocity $u_0^\alpha$, satisfying
\begin{equation}\label{est1}
\begin{aligned}
\|u(t)\|^2+\alpha^2\|\nabla u(t)\|^2+\nu\int_0^t\|\nabla u(\tau)\|^2\dif \tau=\|u_0^\alpha\|^2+\alpha^2\|\nabla u_0^\alpha\|^2,
\end{aligned}
\end{equation}
for every $t \in [0,T]$. Moreover,
\begin{equation}\label{est2}
\|q(t)\|^2\le e^{-\frac{1}{2}(\frac{\nu}{\alpha^2})t}\|q_0\|^2+ \frac{1}{2\alpha^2}(\|u_0^\alpha\|^2+\alpha^2\|\nabla u_0^\alpha\|^2),
\end{equation}
for every $t\in [0,T].$
\end{Theorem}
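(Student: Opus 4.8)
The plan is to prove the two a priori bounds \eqref{est1} and \eqref{est2} first, since these are the heart of the statement, and then to recover existence from a Galerkin scheme whose approximants inherit these bounds uniformly, with uniqueness following from a difference estimate. All the differential identities below should be understood as carried out rigorously on the smooth Galerkin approximations and then passed to the limit; the formal computations already display the required structure.

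For \eqref{est1} I would take the $L^2(\o)$ inner product of the momentum equation in \eqref{eq1} with $u$. Since $v=u-\alpha^2\Delta u$ and $u=0$ on $\p\o$, integration by parts gives $\langle\p_t v,u\rangle=\tfrac12\frac{d}{dt}(\|u\|^2+\alpha^2\|\nabla u\|^2)$ and $-\nu\langle\Delta u,u\rangle=\nu\|\nabla u\|^2$. The decisive algebraic point is that the two nonlinear contributions cancel: integrating by parts and using $\di u=0$ together with $u|_{\p\o}=0$ one finds $\langle(u\cdot\nabla)v,u\rangle=-\langle(u\cdot\nabla)u,v\rangle$, whereas $\langle\sum_j v_j\nabla u_j,u\rangle=\langle v,(u\cdot\nabla)u\rangle$, so their sum vanishes; the pressure term vanishes as well because $\di u=0$ and $u\cdot\hat n=0$. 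Integrating the resulting energy identity in time yields \eqref{est1}.

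For \eqref{est2} I would pass to the equivalent formulation \eqref{eq2} and test the $q$-equation with $q$. The transport term drops out by the scalar analogue of \eqref{zeroint}, namely $\int u\cdot\nabla q\,q\,\dif x=\tfrac12\int u\cdot\nabla(q^2)\,\dif x=0$ since $\di u=0$ and $u\cdot\hat n=0$, leaving $\tfrac12\frac{d}{dt}\|q\|^2+\tfrac{\nu}{\alpha^2}\|q\|^2=\tfrac{\nu}{\alpha^2}\langle\C u,q\rangle$. I would then control the cross term by Cauchy--Schwarz and Young's inequality, absorb a multiple of $\|q\|^2$ into the left-hand side, and use \eqref{normu} together with the pointwise consequence $\alpha^2\|\nabla u(t)\|^2\le\|u_0^\alpha\|^2+\alpha^2\|\nabla u_0^\alpha\|^2$ of \eqref{est1} to estimate $\|\C u\|^2=\|\nabla u\|^2$. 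A Gronwall argument applied to the resulting differential inequality then produces the exponentially decaying term in $\|q_0\|^2$ together with the stationary term of order $\alpha^{-2}(\|u_0^\alpha\|^2+\alpha^2\|\nabla u_0^\alpha\|^2)$ recorded in \eqref{est2}.

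For existence I would set up a Galerkin approximation in a basis of $W$ (for instance eigenfunctions of $I-\alpha^2\Delta$ compatible with the no-slip condition), solve the resulting ODE system, and observe that \eqref{est1} and \eqref{est2} hold uniformly in the truncation parameter. Since a bound on $\|q\|$ controls $u$ in (essentially) $\h^3(\o)$ by elliptic regularity for $u-\alpha^2\Delta u$, Rellich compactness together with a uniform bound on $\p_t q^n$ and the Aubin--Lions lemma gives strong convergence strong enough to pass to the limit in the nonlinear terms; uniqueness then follows by estimating the difference of two solutions in the $\|\cdot\|^2+\alpha^2\|\nabla\cdot\|^2$ energy, using the two-dimensional Ladyzhenskaya inequality \eqref{la} to close the quadratic terms and invoking Gronwall. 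I expect the main obstacle to be this limit passage: because the viscosity enters only through the lower-order term $\nu\Delta u$ while the top-order part $-\alpha^2\p_t\Delta u$ is of transport type, there is no parabolic smoothing, so the compactness required for the nonlinearity must be extracted entirely from the transport bound \eqref{est2} rather than from the dissipation in \eqref{est1}.
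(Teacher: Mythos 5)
Your proposal is essentially the paper's argument: \eqref{est1} by testing the momentum equation with $u$ and the cancellation of the two nonlinear terms, and the $q$-bound by testing the transport equation in \eqref{eq2} with $q$, absorbing the cross term $\frac{\nu}{\alpha^2}\langle\C u,q\rangle$, and applying Gronwall. For existence and uniqueness the paper simply cites \cite{ce,Galdi}, so your Galerkin sketch is a more self-contained substitute, and in fact matches how those references proceed; your closing observation --- that the top-order term $-\alpha^2\p_t\Delta u$ gives no parabolic smoothing, so compactness for the nonlinearity must come from the transport bound on $q$ rather than from the dissipation --- is exactly the right technical point there. The one place you deviate is the final step of \eqref{est2}: you insert the pointwise-in-time bound $\|\C u(s)\|^2=\|\nabla u(s)\|^2\le\alpha^{-2}\bigl(\|u_0^\alpha\|^2+\alpha^2\|\nabla u_0^\alpha\|^2\bigr)$ \emph{before} Gronwall. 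Writing $M_0=\|u_0^\alpha\|^2+\alpha^2\|\nabla u_0^\alpha\|^2$ and optimizing the Young parameter, the steady state of your differential inequality is (source)/(decay rate), which gives at best
\begin{equation*}
\|q(t)\|^2\le e^{-\frac{\nu}{\alpha^2}t}\|q_0\|^2+\frac{M_0}{\alpha^2},
\end{equation*}
i.e.\ \eqref{est2} with constant $\alpha^{-2}$ in place of $\tfrac{1}{2}\alpha^{-2}$. The paper instead keeps $\|\C u\|^2$ under the time integral (its intermediate estimate \eqref{est3}) and only then estimates, using $e^{-\frac12\frac{\nu}{\alpha^2}t}e^{\frac12\frac{\nu}{\alpha^2}s}\le 1$ for $s\le t$ together with \eqref{normu} and the \emph{integrated} dissipation in \eqref{est1}, that this term is bounded by $\frac{1}{2\alpha^2}\,\nu\int_0^t\|\nabla u\|^2\,\dif s\le\frac{M_0}{2\alpha^2}$; that is what produces the stated factor $\frac{1}{2\alpha^2}$. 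The discrepancy is only a factor of $2$ and is harmless for every later use of the estimate (only $\|q\|^2\le K\alpha^{-2}$ is invoked, cf.\ \eqref{qest}), but as written your argument proves a slightly weaker inequality than the one stated; replacing the pointwise bound by the integrated-dissipation bound recovers \eqref{est2} exactly.
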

\begin{proof}
The existence and uniqueness of a solution to \eqref{eq1} was obtained in \cite{ce,Galdi}. By standard  energy estimates, it is easy to obtain identity \eqref{est1} as well as
\begin{equation}\label{est3}
\|q(t)\|^2\le e^{-\frac{1}{2}(\frac{\nu}{\alpha^2})t}\left(\|q_0\|^2+ \frac{\nu}{2\alpha^2}\int_0^t e^{\frac{1}{2}(\frac{\nu}{\alpha^2})s}\|\C u\|^2\dif s\right).
\end{equation}
From \eqref{normu}, \eqref{est1} and \eqref{est3} we conclude  \eqref{est2}.
\end{proof}

In the next section we will investigate the convergence of solutions of the second-grade fluid equations, as $\alpha \to 0$ and $\nu \to 0$, to the corresponding solutions of the 2D incompressible Euler equations. To this end, we will need the following existence and regularity result concerning the solution of  the Euler equations \eqref{eu} (see, for example, \cite{katolai,te1}).
\begin{Theorem}\label{kato-lai}
Fix $T>0$ and $s\ge 3$. Let $u_0\in \h^s(\o)\cap H$. Then there exists a unique solution $\bar{u}$ of  \eqref{eu}--\eqref{bdry2}, with initial velocity $u_0$, such that $\bar{u}\in C([0,T]; \h^s(\o))$. Moreover, $\bar{u}$ also belongs to $C^1([0,T];\h^{s-1}(\o))$ and $\|\bar{u}(t)\|=\|u_0\|$, for any $t \in [0,T]$.
\end{Theorem}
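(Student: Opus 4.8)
\section*{Proof proposal}

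The plan is to pass to the two-dimensional vorticity formulation and exploit the transport structure that is special to the plane. Writing $\om = \C \bar u = \nabla^\perp \cdot \bar u$ and applying $\C$ to \eqref{eu}, the vortex-stretching term disappears in 2D and one obtains the scalar transport equation $\p_t \om + \bar u \cdot \nabla \om = 0$. Because $\o$ is simply connected, the velocity is recovered from $\om$ through the stream function: solve $\Delta \vp = \om$ in $\o$ with $\vp = 0$ on $\p\o$ and set $\bar u = \nabla^\perp \vp$; this automatically enforces $\di \bar u = 0$ and the non-penetration condition \eqref{bdry2}, and elliptic regularity on the smooth domain $\o$ yields the Biot--Savart estimate $\|\bar u\|_s \le C(\|\bar u\| + \|\om\|_{s-1})$. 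The whole problem is thus reduced to the transport equation for $\om$ coupled with this elliptic recovery.

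Second, I would construct approximate solutions by regularizing the transport velocity, for instance by mollifying $\bar u$ inside the advection term (or, equivalently, by a Galerkin truncation). Each regularized problem is a linear transport equation with a smooth, divergence-free, tangent velocity field, hence solvable globally by the method of characteristics, and it preserves incompressibility so that all $L^p$ norms of the approximate vorticity are conserved. The task then becomes obtaining a priori bounds uniform in the regularization parameter. The energy identity $\|\bar u(t)\| = \|u_0\|$ follows at once from testing \eqref{eu} against $\bar u$ and using the cancellation \eqref{zeroint}.

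For the higher norm I would perform an $\h^{s-1}$ energy estimate on the vorticity equation; a commutator (Kato--Ponce) estimate controls the advection term and yields a differential inequality of the form $\frac{d}{dt}\|\om\|_{s-1} \le C\|\nabla \bar u\|_{L^\infty}\|\om\|_{s-1}$. The decisive 2D ingredient is that $\|\om(t)\|_{L^\infty} = \|\om_0\|_{L^\infty}$ is conserved, combined with the logarithmic elliptic estimate $\|\nabla \bar u\|_{L^\infty} \le C\|\om\|_{L^\infty}\bigl(1 + \log^+(\|\om\|_{s-1}/\|\om\|_{L^\infty})\bigr) + C\|\bar u\|$; feeding this into the differential inequality and applying an Osgood/Gronwall argument closes the estimate globally in time, giving a bound on $\|\om\|_{s-1}$, hence on $\|\bar u\|_s$, on all of $[0,T]$.

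Finally I would pass to the limit: the uniform bounds, together with an Aubin--Lions compactness argument, produce a subsequence converging to a solution, which lies in $L^\infty(0,T;\h^s)$ and, after upgrading weak to strong continuity, in $C([0,T];\h^s)$. Uniqueness follows from an $L^2$ estimate on the difference of two solutions, controlled by the a priori bound on $\|\nabla \bar u\|_{L^\infty}$ via Gronwall. The regularity $\bar u \in C^1([0,T];\h^{s-1})$ is then read off from the equation: since $s \ge 3$, $\h^{s-1}$ is a Banach algebra, so $\bar u \cdot \nabla \bar u \in C([0,T];\h^{s-1})$, and recovering $\nabla \bar p$ by solving the associated Neumann problem shows $\p_t \bar u = -\bar u \cdot \nabla \bar u - \nabla \bar p \in C([0,T];\h^{s-1})$. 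I expect the main obstacle to be the global-in-time closure of the $\h^s$ bound --- that is, establishing and exploiting the logarithmic velocity-gradient estimate together with the conservation of $\|\om\|_{L^\infty}$ --- and, more technically, handling the elliptic estimates and commutators up to the boundary and upgrading to strong continuity in time at the top regularity.
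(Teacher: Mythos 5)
You should first note that the paper does not actually prove this theorem: it is quoted, with references, from Kato--Lai and Temam (\cite{katolai,te1}), so there is no in-paper argument to match yours against. Your proposal is a correct outline, but of a genuinely different proof than the cited ones: you follow the classical two-dimensional vorticity route (essentially Kato's 1967 argument modernized in Beale--Kato--Majda style), reducing to the scalar transport equation for $\omega$ plus elliptic recovery through the stream function, and globalizing via conservation of $\|\omega\|_{L^\infty}$ together with a logarithmic bound on $\|\nabla \bar u\|_{L^\infty}$. The cited proofs instead work directly on the velocity formulation: Kato--Lai sets up an abstract framework for nonlinear evolution equations (a variational scheme on a triplet of spaces, giving local existence, uniqueness and $C([0,T];\mathbf{H}^s)$ continuity in any dimension, with 2D globality then supplied by the vorticity bound), while Temam uses quasi-linear/Galerkin methods with $H^s$ energy estimates. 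Your route is more elementary and delivers global existence in one stroke, but it concentrates the technical burden exactly where you flag it: in a bounded domain you cannot literally mollify the advecting field (mollification destroys tangency at $\partial\Omega$, so you should instead iterate, transporting $\omega_{n+1}$ by the tangent field $u_n$ recovered from the stream function); Kato--Ponce commutator estimates are stated on $\mathbb{R}^n$ and must be replaced by Moser/Gagliardo--Nirenberg product estimates valid in smooth domains, using that $u\cdot\hat n=0$ makes the top-order transport term integrate to zero; the logarithmic inequality needs a bounded-domain version (available, e.g., via Ferrari-type estimates on simply connected smooth domains); and strong continuity in the top norm $\mathbf{H}^s$ requires a Bona--Smith-type regularization argument rather than bare Aubin--Lions, which only gives continuity in weaker norms. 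Modulo these (standard but nontrivial) boundary adaptations, your sketch closes, and the $C^1([0,T];\mathbf{H}^{s-1})$ statement and the energy identity $\|\bar u(t)\|=\|u_0\|$ are handled exactly as one should, the latter by the cancellation \eqref{zeroint}.
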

%
%
%

 \section {Main results}

In this section we state and prove three theorems concerning the limit as $\alpha,\nu \to 0$ of solutions of the second-grade fluid equations. These results aim at describing the second-grade equations as an interpolation between the Euler-$\alpha$ equations and the Navier-Stokes equations. It is hence natural to treat two different regimes, one in which $\nu$ is small with respect to $\alpha$ and the other in which $\alpha$ is small relative to $\nu$.

The first result consists of convergence to the corresponding Euler solution in the case $\nu = \mathcal{O}(\alpha^2)$, as $\alpha \to 0$.

 \begin{Theorem}\label{convtheorem}
Fix $T>0$ and  let $u_0\in \h^3(\Omega)\cap H$.  Assume   that we are given a family of  approximations $\{u_0^\alpha\}_{\alpha>0}\subset \h^3(\o)\cap V$ for $u_0$ satisfying:
\begin{equation}\label{IC}
\begin{aligned}
 &\|u^\alpha_0-u_0\|\to 0,\\
 &\alpha^2\|\nabla u^\alpha_0\|^2=o(1),\\
  &\|u^\alpha_0\|_{3}=\calO( \alpha^{-3}).
 \end{aligned}
 \end{equation}
Let $u^{\alpha,\nu} \in C([0,T];V \cap W)$ be the solution of \eqref{eq1}--\eqref{bdry1},
with initial velocity $u^\alpha_0$. Let $\bar{u} \in C([0,T];\h^3(\Omega) \cap H)
\cap C^1([0,T];\h^2(\Omega))$ be the  solution of the Euler equations \eqref{eu}--\eqref{bdry2}, with initial velocity $u_0$. Assume that  $\nu = \mathcal{O}(\alpha^2)$, as $\alpha \to 0$.
Then $u^{\alpha,\nu}$ converges to $\bar{u}$, strongly in $C([0,T];(L^2(\o))^2)$, as $ \alpha \to 0$.
\end{Theorem}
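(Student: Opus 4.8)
The plan is to adapt Kato's boundary-corrector method, in the form used in \cite{lntz} for the Euler-$\alpha$ case, and to exploit the fact that in the regime $\nu=\mathcal{O}(\alpha^2)$ the elastic energy provides a pointwise-in-time gradient bound that the Navier--Stokes system lacks. First I would construct a Kato-type corrector $\zeta=\zeta^{\delta}$: a divergence-free vector field, supported in the strip $\{x\in\o:\ \mathrm{dist}(x,\p\o)<\delta\}$, whose trace on $\p\o$ equals that of $\bar u$, obtained from a boundary stream function cut off at width $\delta$. I would record the standard estimates $\|\zeta\|\le C\delta^{1/2}$, $\|\p_t\zeta\|\le C\delta^{1/2}$, $\|\nabla\zeta\|\le C\delta^{-1/2}$, $\|\zeta\|_{L^\infty}\le C$, together with the higher-order bounds on $\zeta$ needed to pair against the elastic terms, all with $C$ depending only on $\bar u$ and $\o$. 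Setting $\tilde u:=\bar u-\zeta$, the field $\tilde u$ is divergence-free and vanishes on $\p\o$, so $w:=u^{\alpha,\nu}-\tilde u\in V$; since $\|\zeta\|\to0$ as $\delta\to0$, it suffices to prove $\|w(t)\|\to0$ uniformly on $[0,T]$.

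Next I would introduce the modulated energy $\mathcal{E}(t)=\tfrac12\big(\|w\|^2+\alpha^2\|\nabla w\|^2\big)$, which is the natural quantity for the second-grade system, and compute $\tfrac{d}{dt}\mathcal{E}$ by subtracting the weak formulations of \eqref{eq1} and \eqref{eu} tested against $w\in V$, keeping track of the time dependence of $\tilde u$. The resulting identity contains: the good dissipation term $-\nu\|\nabla u^{\alpha,\nu}\|^2$; the viscous cross term $\nu\langle\nabla u^{\alpha,\nu},\nabla\tilde u\rangle$; the transport/nonlinear terms coming from $(u\cdot\nabla)v+\sum_j v_j\nabla u_j$ and from $\bar u\cdot\nabla\bar u$, for which I would use the cancellation \eqref{zeroint}; the genuinely second-grade terms carrying the factor $\alpha^2$ (those involving $\alpha^2\Delta u$, i.e.\ $q$); and the corrector terms $\langle\p_t\zeta,w\rangle$ and $\alpha^2\langle\nabla\p_t\zeta,\nabla w\rangle$. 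The algebraic structure of the $\nu=0$ terms is exactly as in \cite{lntz}, so the new content is the control of the two viscous terms and their interaction with $\zeta$.

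The key a priori inputs are \eqref{est1}, which gives $\|u^{\alpha,\nu}(t)\|\le C$ and, crucially, the \emph{pointwise-in-time} bound $\alpha\|\nabla u^{\alpha,\nu}(t)\|\le C$, and \eqref{est2} together with \eqref{IC}, which give $\|q(t)\|=\mathcal{O}(\alpha^{-1})$ and hence control of the higher-order norms of $u^{\alpha,\nu}$ weighted by $\alpha^2$. With these, the critical viscous cross term is estimated by $\nu\,|\langle\nabla u^{\alpha,\nu},\nabla\zeta\rangle|\le\nu\|\nabla u^{\alpha,\nu}\|\,\|\nabla\zeta\|\le C\nu\alpha^{-1}\delta^{-1/2}=C\alpha\delta^{-1/2}$ using $\nu=\mathcal{O}(\alpha^2)$; the corrector and elastic terms are handled similarly, with the localization of $\zeta$ to the $\delta$-strip supplying compensating powers of $\delta^{1/2}$, and the nonlinear terms are closed with the Ladyzhenskaya inequality \eqref{la}. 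Choosing $\delta\sim\alpha$ makes every term other than those absorbed into $\mathcal{E}$ of size $o(1)$ as $\alpha\to0$, uniformly on $[0,T]$. A Gronwall argument then yields $\mathcal{E}(t)\le\big(\mathcal{E}(0)+o(1)\big)e^{CT}$, and since \eqref{IC} forces $\mathcal{E}(0)\to0$, I obtain $\|w\|\to0$ and therefore $\|u^{\alpha,\nu}-\bar u\|\le\|w\|+\|\zeta\|\to0$ in $C([0,T];(L^2(\o))^2)$.

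I expect the main obstacle to be the simultaneous control, with a single admissible width $\delta$, of the boundary-concentrated terms: the viscous cross term $\nu\langle\nabla u^{\alpha,\nu},\nabla\zeta\rangle$ and the second-grade nonlinear terms of the form $\alpha^2\langle(\Delta u^{\alpha,\nu}\cdot\nabla)u^{\alpha,\nu},\zeta\rangle$, where only the weak bounds $\alpha\|\nabla u^{\alpha,\nu}\|\le C$ and $\|q\|=\mathcal{O}(\alpha^{-1})$ are available and the integrations by parts must be arranged so as not to lose powers of $\alpha$. The point that distinguishes this case from Kato's Navier--Stokes setting, and that allows convergence to hold outright rather than merely conditionally, is precisely the pointwise-in-time gradient bound supplied by the elastic energy in \eqref{est1}: it makes the viscous cross term small under $\nu=\mathcal{O}(\alpha^2)$ without any hypothesis on the vanishing of the energy dissipation near the boundary.
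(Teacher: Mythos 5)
Your overall framework (Kato-type corrector, energy estimate, Gronwall, with $\nu=\mathcal{O}(\alpha^2)$ neutralizing the viscous cross term via the pointwise bound $\alpha\|\nabla u^{\alpha,\nu}(t)\|\le C$) is the paper's, but there is a genuine gap at exactly the point you flag as "the main obstacle," and it is not a technicality. By substituting $\tilde u=\bar u-\zeta$ into the test function \emph{throughout}, you force the elastic nonlinearities to be paired with the corrector: after the cancellation \eqref{zeroint}, the terms
\begin{equation*}
\alpha^2\int_0^t\int_\Omega (u^{\alpha,\nu}\cdot\nabla)\Delta u^{\alpha,\nu}\cdot\zeta\,\dif x\dif s,
\qquad
\alpha^2\int_0^t\int_\Omega \sum_{j=1}^2(\Delta u^{\alpha,\nu}_j)\,\nabla u^{\alpha,\nu}_j\cdot\zeta\,\dif x\dif s
\end{equation*}
survive, and the available a priori information ($\alpha\|\nabla u^{\alpha,\nu}\|\le C$, $\alpha^2\|u^{\alpha,\nu}\|_2\le C$ by interpolating $\|u^{\alpha,\nu}\|_3=\mathcal{O}(\alpha^{-3})$ against $\|u^{\alpha,\nu}\|_1$) cannot make them $o(1)$ in the regime $\nu=\mathcal{O}(\alpha^2)$. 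Concretely, integrating by parts and using the boundary Poincar\'e inequality \eqref{pinq} together with $\|\nabla\zeta\|_{L^\infty}\le C\delta^{-1}$ yields the bound $K\int_0^t\|\nabla u^{\alpha,\nu}\|_{L^2(\Omega_\delta)}\bigl(\alpha^2\|\Delta u^{\alpha,\nu}\|_{L^2(\Omega_\delta)}\bigr)\dif s=\mathcal{O}(\alpha^{-1})$: the factor $\delta$ from Poincar\'e exactly cancels the $\delta^{-1}$ from $\nabla\zeta$, so no choice of $\delta$ (in particular not your $\delta\sim\alpha$) helps. The alternative route through the dissipation — the paper's estimates \eqref{3rdcn1}--\eqref{3rdcn2} — costs a factor $\alpha^{1/2}\nu^{-3/4}$, which is bounded only under $\alpha=\mathcal{O}(\nu^{3/2})$, the \emph{opposite} regime; under $\nu=\mathcal{O}(\alpha^2)$ the viscosity may vanish arbitrarily fast relative to $\alpha$, so negative powers of $\nu$ are fatal. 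Your claim that "the corrector and elastic terms are handled similarly" and that $\delta\sim\alpha$ renders every remaining term $o(1)$ is therefore false for these two terms, and no mechanism in the proposal repairs them.

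The paper avoids the problem by different bookkeeping: it tests the difference equation with the plain difference $W^{\alpha,\nu}=u^{\alpha,\nu}-\bar u$ and introduces $u_b$ only by adding and subtracting inside the two \emph{linear} terms — the elastic time derivative $I_1=\alpha^2\int_0^t\int_\Omega\partial_s\Delta u^{\alpha,\nu}\cdot W^{\alpha,\nu}$ and the viscous term $I_4=\nu\int_0^t\int_\Omega\Delta u^{\alpha,\nu}\cdot W^{\alpha,\nu}$ — where pairings with $u_b$ are controlled by \eqref{ub}--\eqref{gub} alone. The $\alpha^2$-nonlinear terms $I_2,I_3$ are then paired with the smooth Euler field $\bar u$ only and estimated as in \cite{lntz} by $K\alpha^2\int_0^t\|\nabla u^{\alpha,\nu}\|^2\dif s+KT\alpha^2$. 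Your "full substitution" structure, with test function $u^{\alpha,\nu}-(\bar u-u_b)$, is precisely the one the paper reserves for Theorem \ref{katoNS}, where the hypothesis $\alpha=\mathcal{O}(\nu^{3/2})$ is what tames the corrector--elastic terms. To repair your argument, restrict the corrector to the linear terms as above; your remaining estimates (the viscous cross term via $\nu=\mathcal{O}(\alpha^2)$, the admissibility conditions $\delta\to0$ and $\alpha^2/\delta\to0$, the treatment of the initial data, and the Gronwall step) are then consistent with the paper's proof.
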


\begin{Remark}
In \cite{lntz} the authors introduced the notion of a {\it suitable family of approximations} to
$u_0 \in \h^3 \cap H$ as a family $\{u_0^{\alpha}\}$ satisfying \eqref{IC}.
In Proposition 1 of  \cite {lntz}, it was shown that, for any $u_0\in \h^1(\o)\cap H$ there exists a suitable family of approximations.
\end{Remark}

\begin{Remark}
Hereafter, $K$ will denote a positive constant which is independent of $\alpha$, but might depend on $u_0$.
\end{Remark}

\begin{proof}
We have, from Lemma 1 in \cite{ce}, that $\h^3(\o)\cap V \subseteq W$. Thus
from  (\ref{est1}) and \eqref{IC}, we deduce that for all $t\in [0,T]$,
\begin{equation}\label{enest}
\|u^{\alpha,\nu}(t)\|^2+\alpha^2\|\nabla u^{\alpha,\nu}(t)\|^2+\nu \int_0^t\|\nabla u^{\alpha,\nu}(\tau)\|^2\dif \tau=\|u^\alpha_0\|^2+\alpha^2\|\nabla u^\alpha_0\|^2\le K.
\end{equation}

From \eqref{normu}, together with \eqref{IC} we have:
\begin{equation}\label{iest}
\|q^\alpha_0\|\le\|\C u_0^\alpha \|+\alpha^2\|\C\Delta u_0^\alpha\|\le\frac{K}{\alpha}.
\end{equation}
By virtue of  \eqref{est2}, \eqref{iest} and \eqref{enest}, we have
\begin{equation}\label{qest}
\|q^\alpha(t)\|^2\le  \|q^\alpha_0\|^2+\frac{\|u_0^\alpha\|^2+\alpha^2\|\nabla u^{\alpha}_0\|^2}{2\alpha^2}\le \frac{K}{\alpha^2}.
\end{equation}
From \eqref{normu}, \eqref{eq2}, \eqref{enest} and \eqref{qest}  it follows that, for all $t\in [0,T]$, we have
\begin{equation}\label{curlu}
\alpha^2\|\C\Delta u^{\alpha,\nu} (t)\|\le\|q^\alpha(t)\|+\|\C u^{\alpha,\nu} (t)\|\le \frac{K}{\alpha}.
\end{equation}
We recall from Lemma 2 in \cite{lntz}, that, for all $\psi\in (H^3(\o))^2\cap V$,  we have
\begin{equation}\label{highest}
\|\psi\|_3\le K\| \C\Delta\psi \|.
\end{equation}
 Therefore, from \eqref{curlu} and \eqref{highest}, we conclude that  for all $t\in [0,T]$, it holds that
\begin{equation}\label{3rdest}
\|u^{\alpha,\nu}(t)\|_3\le \frac{K}{\alpha^3}.
\end{equation}

Following the notation introduced in \cite{lntz} we recall the boundary layer corrector, given by
\begin{equation}\label{24}
u_b= \nabla^{\perp}(z\bar{\psi}),
\end{equation} where $\bar{\psi}$ is the stream function associated to $\bar{u}$, and $z=z(x)$ is a cut-off function   supported in a $\delta-$neighborhood of the boundary, $\p\o$.

We list below some useful estimates on the boundary layer corrector obtained in \cite{lntz} (see also \cite{kato}).

For every $t\in [0,T]$, we have that:
%
\begin{alignat}{5}
& \|\p_t^{\ell}u_b(t)\|\le K\delta^{\frac{1}{2}}, \label{ub}\\
& \|\p^{\ell}_t\nabla u_b(t)\|\le K\delta^{-\frac{1}{2}},\label{gub}
\end{alignat}
where $\ell=0,1$,   and $K$  does not depend on $\delta$.

In what follows we will make use of the following  interpolation inequality (see \cite{ga2}, e.g.),
\begin{equation}\label{interinq} \|f\|^2_1\le K\|f\|\|f\|_2,~\mbox{for all}~f\in H^2(\o).
\end{equation}

Set $W^{\alpha,\nu}=u^{\alpha,\nu}-\bar{u}$. From \eqref{eq1} and \eqref{eu}, $W^{\alpha,\nu}$ satisfies:
\begin{equation}\label{eq3}
\left\{\begin{array}{cl} \partial_t W^{\alpha,\nu}+( u^{\alpha,\nu}\cdot\nabla) W^{\alpha,\nu}+(W^{\alpha,\nu}\cdot\nabla)\bar{u}\\
 =\di \tau^{\alpha,\nu}+\nabla\left(\bar{p}-p^\alpha-\displaystyle{\frac{|u^{\alpha,\nu}|^2}{2}}\right), &\mbox{in}~~\Omega\times (0, T) ,\\[3mm]
\di W^{\alpha,\nu}= 0, &\mbox{in}~~\Omega\times (0, T), \\[2mm]
W^{\alpha,\nu}\cdot\hat {n}=0&\mbox{on}~~\p\Omega\times (0, T), \\
W^{\alpha,\nu}(x,0)=u^{\alpha}_0-u_0 & \mbox{in}~~\Omega,
\end{array}\right.
\end{equation}
where\begin{equation*}
\begin{aligned}
&\di\tau^{\alpha,\nu}=\alpha^2\p_t\Delta u^{\alpha,\nu}+\nu\Delta u^{\alpha,\nu}+\alpha^2(u^{\alpha,\nu}\cdot\nabla)\Delta u^{\alpha,\nu}+\alpha^2\sum_{j=1}^2(\Delta u^{\alpha,\nu}_j)\nabla u_j^{\alpha,\nu}.
\end{aligned}
\end{equation*}
Multiply  equation (\ref{eq3}) by $W^{\alpha,\nu}$ and integrate over $\Omega\times (0,t)$, for $t\in [0,T]$. 
We then obtain
\begin{equation} \label{mainestW}
\begin{aligned}
\frac{1}{2}\|W^{\alpha,\nu}(t)\|^2+\int_0^t\int_\Omega (W^{\alpha,\nu}\cdot\nabla)\bar{u}\cdot W^{\alpha,\nu}\dif x\dif s=\int_0^t\int_{\Omega}\di \tau^{\alpha,\nu} \cdot W^{\alpha,\nu}\dif x\dif s
\\ + \frac{1}{2} \|W^{\alpha,\nu}(0)\|^2.
\end{aligned}
\end{equation}
Clearly,
\begin{equation}\label{mixterm}
\left|\int_0^t\int_\Omega (W^{\alpha,\nu}\cdot\nabla)\bar{u}\cdot W^{\alpha,\nu}\dif x\dif s\right|\le\|\nabla\bar{u}\|_{L^\infty(\Omega\times (0,T))}\int_0^t\|W^{\alpha,\nu}(s)\|^2\dif s.
\end{equation}
 Moreover,
\begin{equation}\label{eq4}
\begin{aligned}
\int_0^t\int_{\Omega}\di \tau^{\alpha,\nu}\cdot W^{\alpha,\nu}\dif x\dif s&=\alpha^2\int_0^t\int_\Omega\p_s\Delta u^{\alpha,\nu} \cdot W^{\alpha,\nu}\dif x\dif s\\
&-\alpha^2\int_0^t\int_\Omega(u^{\alpha,\nu}\cdot\nabla)\Delta u^{\alpha,\nu}\cdot \bar{u}\dif x\dif s\\
&-\alpha^2\int_0^t\int_\Omega\sum_{j=1}^2(\Delta u^{\alpha,\nu}_j)\nabla u_j^{\alpha,\nu}\cdot \bar{u}\dif x\dif s
\\
&+\nu\int_0^t\int_\Omega\Delta u^{\alpha,\nu}\cdot W^{\alpha,\nu}\dif x\dif s\\
&=:I_1(t)+I_2(t)+I_3(t)+I_4(t).
\end{aligned}
\end{equation}

Estimates for $I_1(t)$ and $I_2(t) + I_3(t)$ 
were already provided in the proof of Theorem 2 of \cite{lntz}. In particular, it was shown that,
for all $t\in [0,T]$,
\begin{equation}\label{est61}
\begin{aligned}
I_2(t) + I_3(t) \le K\alpha^2\int_0^t\|\nabla u^{\alpha,\nu}(s)\|^2\dif s+KT\alpha^2,
\end{aligned}
\end{equation}
see (4.18) and (4.19) in \cite{lntz}.

We will give some details of the estimate for $I_1$. We start by rewriting $I_1$, as in \cite{lntz}:

\begin{equation*}
\begin{aligned}
I_1(t) &=  \alpha^2 \int_0^t\int_\Omega \p_s\Delta u^{\alpha,\nu}\cdot W^{\alpha,\nu}\dif x\dif s\\
&=- \alpha^2 \int_0^t\int_{\Omega} \partial_s \nabla u^{\alpha,\nu} \,\cdot \,\nabla u^{\alpha,\nu} \dif x \dif s
 - \alpha^2 \int_0^t \int_{\Omega} \partial_s \Delta u^{\alpha,\nu} \cdot (\bar{u} - u_b) \dif x \dif s\\
 &- \alpha^2 \int_0^t \int_{\Omega} \partial_s \Delta u^{\alpha,\nu} \cdot u_b \dif x \dif s,
\end{aligned}
\end{equation*}
where $u_b$ is given in \eqref{24} for a suitable choice of $\delta$.

We have, for all $t\in [0,T],$
\begin{equation} \label{I1est}
I_1(t) \leq -\frac{\alpha^2}{4} \|\nabla u^{\alpha,\nu}(t)\|^2 +\frac{\alpha^2}{2} \|\nabla u^{\alpha}_0\|^2+ K \alpha^2\int_0^t \|\nabla u^{\alpha,\nu}\|^2 \dif s  + g(\delta,\alpha, u_0^{\alpha},u_0),
\end{equation}
with
\begin{align*}
g(\delta,\alpha,u_0^\alpha,u_0) &=  - \alpha^2\int_{\Omega} \nabla u_0^{\alpha}\cdot \nabla u_0 \dif x+ K\alpha^2 +
 K \alpha\delta^{-1/2}   + K \alpha^2\delta^{-1}+K\delta^{1/2}  + K\delta^{2/3},
 \end{align*}
see (4.15) of \cite{lntz} for details.
Choose $\delta=\delta(\alpha)$ such that
\begin{equation}\label{deltadata}
\delta(\alpha)\to 0~\mbox{and}~\frac{\alpha^2}{\delta(\alpha)}\to 0,~\mbox{as}~\alpha\to 0.
\end{equation}
It follows from our choice in \eqref{deltadata} and the hypotheses of  Theorem \ref{convtheorem}, namely \eqref{IC}, that
\begin{equation}\label{E2}
g(\delta,\alpha,u_0^\alpha,u_0) \to 0, \;\;\text{ as } \;\;\alpha \to 0.
\end{equation}

Finally, we  estimate the dissipative term. Here   we use the boundary corrector $u_b$ to allow integration by parts. We obtain
\begin{equation}\label{est9}
\begin{aligned}
I_4(t)&=\nu\int_0^t\int_\o\Delta u^{\alpha,\nu}\cdot(u^{\alpha,\nu}-\bar{u})\dif x\dif s=-\nu\int_0^t\int_\o(\nabla u^{\alpha,\nu} :\nabla u^{\alpha,\nu})\dif x\dif s\\
&-\nu\int_0^t\int_\o\Delta u^{\alpha,\nu}\cdot(\bar{u}-u_b))\dif x\dif s-\nu\int_0^t\int_\o\Delta u^{\alpha,\nu}\cdot u_b\dif x\dif s\\
&=-\nu\int_0^t\int_\o|\nabla u^{\alpha,\nu} |^2\dif x\dif s-\nu\int_0^t\int_\o(\nabla u^{\alpha,\nu}:\nabla(\bar{u}-u_b))\dif x\dif s\\
&-\nu\int_0^t\int_\o\Delta u^{\alpha,\nu}\cdot u_b\dif x\dif s\\
&\le-\nu\int_0^t\|\nabla u^{\alpha,\nu}(s)\|^2\dif s+\nu\int_0^t\|\nabla u^{\alpha,\nu}(s)\|\|\nabla\bar{u}(s)\|\dif s\\
&+\nu\int_0^t\|\nabla u^{\alpha,\nu}(s)\|\|\nabla u_b(s)\|\dif s+\nu\int_0^t\|\Delta u^{\alpha,\nu}(s)\|\|u_b(s)\|\dif s
\end{aligned}
\end{equation}
Thanks to \eqref{ub}, \eqref{gub} and the result in Theorem \ref{kato-lai}, we   obtain
\begin{equation}\label{i4est}
\begin{aligned}
I_4(t)&\le -\nu\int_0^t\|\nabla u^{\alpha,\nu}(s)\|^2\dif s+K\frac{\nu}{\alpha}\int_0^t(\alpha\|\nabla u^{\alpha,\nu}(s)\|)\dif s\\
&+K\frac{\nu}{\alpha}\delta^{-\frac{1}{2}}\int_0^t(\alpha\|\nabla u^{\alpha,\nu}(s)\|)\dif s+K\frac{\nu}{\alpha^2}\delta^\frac{1}{2}\int_0^t(\alpha^2\|\Delta u^{\alpha,\nu}(s)\|)\dif s.
\end{aligned}
\end{equation}
We apply  the interpolation inequality \eqref{interinq} to estimate the term $\|\Delta u^{\alpha,\nu}(s)\|$. Then   estimates \eqref{enest},  \eqref{3rdest} and \eqref{i4est} give
\begin{equation}\label{i4final}
I_4\le -\nu\int_0^t\|\nabla u^{\alpha,\nu}(s)\|^2\dif s+KT\frac{\nu}{\alpha}
+KT\frac{\nu}{\alpha}\delta^{-\frac{1}{2}}+KT\frac{\nu}{\alpha^2}\delta^\frac{1}{2}.
\end{equation}

Putting together  \eqref{est61}, \eqref{I1est}, \eqref{deltadata}, \eqref{E2} and (\ref{i4final}), we  conclude that, for all $t\in [0,T]$,
\begin{equation}\label{Cauchytensor}
\begin{aligned}
&\int_0^t\int_o\di\tau^{\alpha,\nu}\cdot W^{\alpha,\nu}\dif x\dif s=I_1(t)+I_2(t)+I_3(t)+I_4(t)\\
&\le -\frac{1}{4}\alpha^2\|\nabla u^{\alpha,\nu}(t)\|^2-\nu\int_0^t\|\nabla u^{\alpha,\nu}(s)\|^2\dif s+\frac{\alpha^2}{2}\|\nabla u^{\alpha,\nu}_0\|^2\\
&+ K\alpha^2\int_0^t\|\nabla u^{\alpha,\nu}(s)\|^2\dif s+g(\delta,\alpha,u_0^\alpha,u_0)\\
&+KT\alpha^2 +KT\frac{\nu}{\alpha}
+KT\frac{\nu}{\alpha}\delta^{-\frac{1}{2}}+KT\frac{\nu}{\alpha^2}\delta^\frac{1}{2}.
\end{aligned}
\end{equation}
 From \eqref{mixterm} and \eqref{Cauchytensor}, we have
\begin{equation}\label{est10}
\begin{aligned}
&\|W^{\alpha,\nu} (t)\|^2+\alpha^2\|\nabla u^{\alpha,\nu}\|^2 +2\nu\int_0^t\|\nabla u^{\alpha,\nu}(s)\|^2\dif s\le  K\int_0^t\|W^{\alpha,\nu}(s)\|^2\dif s \\
&+K(\|W^{\alpha,\nu} (0)\|^2+\alpha^2\|\nabla u^{\alpha,\nu}_0\|^2)+K\alpha^2\int_0^t\|\nabla u^{\alpha,\nu}(s)\|^2\dif s+\tilde{g}(u_0^\alpha,u_0,u_b(0)),
\end{aligned}
\end{equation}
where
\begin{equation*}
\begin{aligned}
\tilde{g}(u_0^\alpha,u_0,u_b(0))&=KT\alpha^2+KT\alpha\frac{\nu}{\alpha^2}\\
&+KT\frac{\nu}{\alpha^2}\alpha\delta^{-\frac{1}{2}}
+KT\frac{\nu}{\alpha^2}\delta^\frac{1}{2}+g(\delta,\alpha,u_0^\alpha, u_0).
\end{aligned}
\end{equation*}
From the conditions \eqref{deltadata},  \eqref{E2} and the assumption   $\nu = \mathcal{O}(\alpha^2)$, we
infer that
\begin{equation}\label{gg}
\tilde{g}(u_0^\alpha,u_0,u_b(0))\to 0,
\end{equation}
as $\alpha,\nu\to 0$.
Applying Gronwall's lemma to \eqref{est10},  we obtain
 \begin{equation}\label{west}
 \begin{aligned}
& \sup_{t\in[0,T]}\left(\|W^{\alpha,\nu}(t)\|^2+\alpha^2\|\nabla u^{\alpha,\nu}(t)\|^2\right)+\nu\int_0^T\|\nabla u^{\alpha,\nu}\|^2\dif t\\
 &\le e^{K_2T}\left[K_1(\|W^{\alpha,\nu}(0)\|^2+\alpha^2\|\nabla u^\alpha_0\|^2)+\tilde{g}( u_0^{\alpha},\bar{u}_0, u_b(0),\alpha)\right],
 \end{aligned}
 \end{equation}
 where $K_1,K_2$ do not depend on $\alpha,\nu$.
From \eqref{IC}, \eqref{gg} and \eqref{west}, it  follows that
\begin{equation}\label{stc}
\sup_{t\in (0,T)}(\|u^{\alpha,\nu}(t)-\bar{u}(t)\|^2+\alpha^2\|\nabla u^{\alpha,\nu}(t)\|^2)+\nu\int_0^T\|\nabla u^{\alpha,\nu}\|^2\dif t\to 0
\end{equation}
as $\alpha,\nu\to 0$, provided $\nu = \mathcal{O}(\alpha^2)$.
\end{proof}

The result we have just proved, Theorem \ref{convtheorem}, is a natural extension of Theorem 2
in \cite{lntz}, which treated the special case $\nu = 0, \alpha \to 0$. In fact, the proof we have just presented is an easy adaptation of the proof of Theorem 2 in \cite{lntz}.   It is natural to seek an extension of Kato's criterion, known for the case $\alpha = 0$, to the second-grade fluid equations.
We obtain two distinct results in this direction, one which is an extension of Theorem \ref{convtheorem}, with $\alpha^2 << \nu = \mathcal{O}(\alpha^{6/5})$, $\alpha \to 0$, and another which is an extension of Kato's original result in \cite{kato} to second-grade fluids, which works for $\alpha = \mathcal{O}(\nu^{3/2})$, as $\nu \to 0$.

Recall that a {\it suitable family of approximations} to a vector field $u_0\in \h^3(\Omega)\cap H$ is a family $\{u_0^\alpha\}_{\alpha>0}\subset \h^3(\o)\cap V$ satisfying \eqref{IC}.

\begin{Theorem}\label{katotheorem}
Fix $T>0$ and  let $u_0\in \h^3(\Omega)\cap H$. Let  $\{u_0^\alpha\}_{\alpha>0}\subset \h^3(\o)\cap V$, be a suitable family of approximations for $u_0$.  Let $u^{\alpha,\nu} \in C([0,T];V \cap W)$ be the solution of \eqref{eq1}--\eqref{bdry1}
with initial velocity $u^\alpha_0$. Let $\bar{u} \in C([0,T];\h^3(\Omega) \cap H)
\cap C^1([0,T];\h^2(\Omega))$ be the  solution of the Euler equations \eqref{eu}--\eqref{bdry2}, with initial velocity $u_0$. Assume that
\begin{equation} \label{kappathing}
\lim _{\alpha\to0}\frac{\nu}{\alpha^2}=\infty,
\end{equation}
and that
\begin{equation}\label{rel1}
{\nu} = \mathcal{O}({\alpha^{6/5}}).
\end{equation}
 Then
$u^{\alpha,\nu}$ converges  strongly to $\bar{u}$ in $C([0,T];(L^2(\o))^2)$,  as $\alpha\to 0$, if and only if

\begin{align} \label{kato1}
& \,\, \lim_{\alpha\to0 }\nu\int_0^T\int_{\o_\delta}|\nabla u^{\nu,\alpha}|^2\dif x\dif t=0,
\end{align}

where $\o_\delta$ is a $\delta-$neighborhood of $\p\o$ with
\begin{equation}\label{delta}
\delta=C\frac{\alpha^3}{\nu^\frac{3}{2}}.
 \end{equation}

 \end{Theorem}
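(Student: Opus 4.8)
\section*{Proof proposal}

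The plan is to prove the two implications separately, writing $D_\delta := \nu\int_0^T\int_{\o_\delta}|\nabla u^{\alpha,\nu}|^2\,\dif x\,\dif t$ for the boundary-layer dissipation in \eqref{kato1}, with $\delta$ as in \eqref{delta}. The implication ``convergence $\Rightarrow$ \eqref{kato1}'' is the soft direction and should need no correctors. From the energy identity \eqref{est1} one has $\nu\int_0^T\|\nabla u^{\alpha,\nu}\|^2\,\dif\tau=\|u_0^\alpha\|^2+\alpha^2\|\nabla u_0^\alpha\|^2-\|u^{\alpha,\nu}(T)\|^2-\alpha^2\|\nabla u^{\alpha,\nu}(T)\|^2$. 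If $u^{\alpha,\nu}\to\bar u$ in $C([0,T];(L^2(\o))^2)$, then $\|u^{\alpha,\nu}(T)\|\to\|\bar u(T)\|=\|u_0\|$ by the energy conservation in Theorem \ref{kato-lai}, while $\|u_0^\alpha\|\to\|u_0\|$ and $\alpha^2\|\nabla u_0^\alpha\|^2=o(1)$ by \eqref{IC}; dropping the nonnegative term $\alpha^2\|\nabla u^{\alpha,\nu}(T)\|^2$, the full dissipation $\nu\int_0^T\|\nabla u^{\alpha,\nu}\|^2\to0$. Since $D_\delta$ is bounded by the full dissipation, \eqref{kato1} follows.

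For the converse, ``\eqref{kato1} $\Rightarrow$ convergence,'' I would run the energy estimate for $W^{\alpha,\nu}=u^{\alpha,\nu}-\bar u$ exactly as in the proof of Theorem \ref{convtheorem}, keeping the splitting $\di\tau^{\alpha,\nu}\cdot W^{\alpha,\nu}=I_1+I_2+I_3+I_4$ of \eqref{eq4}. The terms $I_2+I_3$ and the interior parts of $I_1$ are still $o(1)$: the bound \eqref{est61}, combined with $\int_0^T\|\nabla u^{\alpha,\nu}\|^2\,\dif\tau\le K/\nu$ from \eqref{est1}, gives $\alpha^2\int_0^T\|\nabla u^{\alpha,\nu}\|^2\le K\alpha^2/\nu\to0$ precisely by hypothesis \eqref{kappathing}; and the auxiliary function $g(\delta,\alpha,u_0^\alpha,u_0)$ in \eqref{I1est} vanishes once one checks, with $\delta=C(\alpha^2/\nu)^{3/2}$, that $\delta\to0$, $\alpha\delta^{-1/2}\to0$ and $\alpha^2\delta^{-1}\to0$ -- the last two being where the upper restriction \eqref{rel1} is consumed.

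What changes is that $I_4$ can no longer be discarded against the good dissipation as in \eqref{i4final}. Instead I would retain $-\nu\int_0^t\|\nabla u^{\alpha,\nu}\|^2$ and, using the decomposition \eqref{est9}, show that every cross term is absorbed into $o(1)+K\sqrt{D_\delta}$. The interior piece $-\nu\int\nabla u^{\alpha,\nu}:\nabla\bar u$ is $O(\nu^{1/2})$ by Cauchy--Schwarz in time together with $\int_0^T\|\nabla u^{\alpha,\nu}\|^2\le K/\nu$; the localized piece $\nu\int_0^t\int_{\o_\delta}\nabla u^{\alpha,\nu}:\nabla u_b$ is, by Cauchy--Schwarz and \eqref{gub}, at most $K\,(\nu/\delta)^{1/2}\sqrt{D_\delta}$, and the choice \eqref{delta} is engineered so that $\nu/\delta=\nu^{5/2}/(C\alpha^3)=O(1)$ exactly under \eqref{rel1}. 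Granting control of the remaining term of \eqref{est9}, a Gronwall argument applied to the inequality for $\|W^{\alpha,\nu}\|^2+\alpha^2\|\nabla u^{\alpha,\nu}\|^2$ yields $\sup_{[0,T]}\|W^{\alpha,\nu}\|^2\le Ke^{KT}\big(\|W^{\alpha,\nu}(0)\|^2+o(1)+\sqrt{D_\delta}\big)$, so \eqref{kato1} forces convergence.

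The hard part, and the step I expect to be the real obstacle, is the term $-\nu\int_0^t\int_\o\Delta u^{\alpha,\nu}\cdot u_b$ in \eqref{est9}. The crude bound of \eqref{i4est}--\eqref{i4final}, namely $\nu\int\|\Delta u^{\alpha,\nu}\|\,\|u_b\|$ with $\alpha^2\|\Delta u^{\alpha,\nu}\|\le K$ and $\|u_b\|\le K\delta^{1/2}$ from \eqref{ub}, now produces a contribution of order $(\nu/\alpha^2)\delta^{1/2}=(\nu/\alpha^2)^{1/4}$, which blows up under \eqref{kappathing}: the second-order factor $\Delta u^{\alpha,\nu}$ is too large to absorb directly. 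To control it I would trade the norm of $\Delta u^{\alpha,\nu}$ for the second-grade structure. Since $u_b=\nabla^\perp(z\bar\psi)$ with $z\bar\psi=0$ on $\p\o$, and $\Delta u^{\alpha,\nu}=\nabla^\perp(\C u^{\alpha,\nu})$ for divergence-free fields, integration by parts produces no boundary term and gives $-\nu\int_\o\Delta u^{\alpha,\nu}\cdot u_b=\nu\int_\o(z\bar\psi)\,\C\Delta u^{\alpha,\nu}=\tfrac{\nu}{\alpha^2}\int_\o(z\bar\psi)(\C u^{\alpha,\nu}-q)$, using $q=\C u^{\alpha,\nu}-\alpha^2\C\Delta u^{\alpha,\nu}$. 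The key is that here $q$ relaxes quickly: integrating \eqref{est3} in time and invoking \eqref{kappathing} yields $\int_0^T\|q\|^2\,\dif t\le K/\nu$, vastly better than the pointwise bound $\|q\|\le K/\alpha$. Feeding in the $q$-equation $\tfrac{\nu}{\alpha^2}(\C u^{\alpha,\nu}-q)=\partial_t q+u^{\alpha,\nu}\cdot\nabla q$ and integrating by parts in space and time -- all boundary and divergence contributions dropping because $z\bar\psi$ vanishes on $\p\o$ and $u^{\alpha,\nu}\cdot\hat n=0$ -- leaves transport and time-endpoint terms that I would try to bound, via the strip estimate $\|u^{\alpha,\nu}\|_{L^2(\o_\delta)}\le K\delta\|\nabla u^{\alpha,\nu}\|_{L^2(\o_\delta)}$ and $\int_0^T\|q\|^2\le K/\nu$, by $K\sqrt{D_\delta}+o(1)$. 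An alternative route is to group this term with the corrector part of $I_1$ and use \eqref{eq1} to replace $\alpha^2\partial_t\Delta u^{\alpha,\nu}+\nu\Delta u^{\alpha,\nu}$ by $\partial_t u^{\alpha,\nu}+(u^{\alpha,\nu}\cdot\nabla)v+\sum_j v_j\nabla u_j^{\alpha,\nu}+\nabla p$, removing the second-order viscous corrector at the cost of nonlinear correctors handled with \eqref{enest}, \eqref{qest} and \eqref{3rdest}. Either way, the genuine difficulty -- and the place where the thresholds \eqref{rel1} and \eqref{delta} are forced -- is the exponent bookkeeping needed to make every localized and endpoint contribution collapse to $O(\sqrt{D_\delta})+o(1)$ uniformly across the whole regime $\alpha^2\ll\nu=\mathcal{O}(\alpha^{6/5})$.
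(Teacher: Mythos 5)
Your soft direction and the overall skeleton of your converse coincide with the paper's argument: same starting identity \eqref{mainestW}, same $I_1,\dots,I_4$ splitting, same verification that $\delta=C\alpha^3\nu^{-3/2}$ satisfies \eqref{deltadata} (so $g\to0$) and that $\nu^{1/2}\delta^{-1/2}=\nu^{5/4}\alpha^{-3/2}=\calO(1)$ under \eqref{rel1}. But the one step you explicitly defer --- the corrector term $-\nu\int_0^t\int_\o\Delta u^{\alpha,\nu}\cdot u_b\,\dif x\,\dif s$ --- is where the whole proof lives, and neither of your sketched routes closes it. The paper's resolution is one line, using only tools you already cite: since $u_b$ is supported in $\o_\delta$, keep the strip norm $|\|\cdot\||=L^2(0,T;L^2(\o_\delta))$, and instead of collapsing $\alpha^2\|\Delta u^{\alpha,\nu}\|\le K$ (the lossy step you correctly identify as fatal under \eqref{kappathing}), interpolate via \eqref{interinq} and \eqref{3rdest} so as to \emph{retain a half power of the gradient}: $\alpha^2\|\Delta u^{\alpha,\nu}\|\le K\alpha^2\|\nabla u^{\alpha,\nu}\|^{1/2}\|u^{\alpha,\nu}\|_3^{1/2}\le K\alpha^{1/2}\|\nabla u^{\alpha,\nu}\|^{1/2}$, which is \eqref{allint}. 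The hard term is then bounded by $K\delta^{1/2}\nu^{3/4}\alpha^{-3/2}\,(\nu^{1/2}|\|\nabla u^{\alpha,\nu}\||)^{1/2}$, and the choice \eqref{delta} makes the coefficient $\delta^{1/2}\nu^{3/4}\alpha^{-3/2}$ exactly $\calO(1)$, so the term is of order $\bigl(\nu\int_0^T\int_{\o_\delta}|\nabla u^{\alpha,\nu}|^2\bigr)^{1/4}$ and tends to $0$ by hypothesis \eqref{kato1}. In other words, this term is not $o(1)$ unconditionally; it is controlled by the Kato quantity itself, which is precisely what makes the criterion an equivalence and what $\delta$ is engineered for.

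Moreover, your primary alternative route does not merely remain unfinished; with the available bounds it appears to genuinely fail in part of the admissible regime. Your estimate $\int_0^T\|q\|^2\,\dif t\le K/\nu$ is correct (it follows from \eqref{est3} and the energy bound $\nu\int_0^T\|\nabla u^{\alpha,\nu}\|^2\le K$), but after writing $-\nu\int_\o\Delta u^{\alpha,\nu}\cdot u_b=\int_\o(z\bar{\psi})(\p_t q+u^{\alpha,\nu}\cdot\nabla q)\,\dif x$ and integrating in time, the endpoint terms $\int_\o(z\bar{\psi})\,q(t)\,\dif x$ must be estimated pointwise in $t$, since convergence in $C([0,T];(L^2(\o))^2)$ requires a supremum in time; the only available bounds, $\|q(t)\|\le K/\alpha$ from \eqref{qest} and $\|z\bar{\psi}(t)\|_{L^2(\o_\delta)}\le K\delta^{3/2}$, give a contribution $K\delta^{3/2}\alpha^{-1}=K\alpha^{7/2}\nu^{-9/4}$, which diverges whenever $\nu\ll\alpha^{14/9}$ --- a nonempty window inside $\alpha^2\ll\nu=\calO(\alpha^{6/5})$ (take $\nu=\alpha^{9/5}$). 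The transport piece fares no better: via \eqref{pinq} and $\int_0^T\|q\|^2\le K/\nu$ it is bounded by $K(\delta/\nu)\bigl(\nu\int_0^T\int_{\o_\delta}|\nabla u^{\alpha,\nu}|^2\bigr)^{1/2}$ with $\delta/\nu=C\alpha^3\nu^{-5/2}$ unbounded in the same sub-regime. So the ``exponent bookkeeping'' you postpone does not work out along that route, and your second alternative (substituting the momentum equation \eqref{eq1}) is likewise only named, not executed; the missing idea is the localized interpolation \eqref{allint}, not finer accounting.
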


\begin{proof}
Assume first that the family $u^{\alpha,\nu}$ converges, as $\alpha \to 0$, to $\bar{u}$ strongly  in $C([0,T];L^2(\o))$. Then, since $\|\bar{u}(t)\|$ is constant, $0\leq t \leq T$, and since we are under hypothesis \eqref{rel1}, it follows easily from the energy estimate \eqref{enest} together with the conditions \eqref{IC} for a suitable family of approximations, that the conclusion \eqref{kato1} holds true.

Conversely, we now suppose that condition \eqref{kato1} is valid, with $\delta$ given by \eqref{delta}. Let us denote the $L^2(0,T;L^2(\o_\delta))$-norm  by  $|\|\cdot\||$.

We use the notation in the proof of Theorem \ref{convtheorem}. Our starting point is the identity \eqref{mainestW}, for which, clearly, it suffices to estimate the terms $I_1$, $I_2$, $I_3$ and $I_4$, see \eqref{eq4}. The only estimate we need to modify is the estimate for $I_4$.

From \eqref{est9}, we have that
\begin{equation}\label{esti4}
\begin{aligned}
&I_4 \le-\nu\int_0^t\|\nabla u^{\alpha,\nu}(s)\|^2\dif s+KT\nu^\frac{1}{2}\\
&+\frac{\nu^\frac{1}{2}}{\delta^\frac{1}{2}}(\nu^\frac{1}{2}|\|\nabla u^{\alpha,\nu}\||)+\nu\delta|\|\Delta u^{\alpha,\nu}\||\\
&=-\nu\int_0^t\|\nabla u^{\alpha,\nu}(s)\|^2\dif s+KT\nu^\frac{1}{2}\\
&+\nu^\frac{1}{2}\delta^{-\frac{1}{2}}(\nu^\frac{1}{2}|\|\nabla u^{\alpha,\nu}\||)+\nu\delta^\frac{1}{2}\alpha^{-2}(\alpha^2|\|\Delta u^{\alpha,\nu}\||).
\end{aligned}
\end{equation}
From \eqref{3rdest} and \eqref{interinq}, we have
\begin{equation}\label{allint}
\begin{aligned}
\alpha|\|\nabla u\||=\frac{\alpha}{\nu^\frac{1}{2}}(\nu^\frac{1}{2}|\|\nabla u\||),~\alpha^2|\|\Delta u\||\le K\frac{\alpha^\frac{1}{2}}{\nu^\frac{1}{4}}(\nu^\frac{1}{2}|\|\nabla u\||)^\frac{1}{2}.
\end{aligned}
\end{equation}
From \eqref{allint}, we find that
\begin{equation}\label{l4bdd}
\begin{aligned}
I_4&\le -\nu\int_0^t\|\nabla u^{\alpha,\nu}(s)\|^2\dif s+KT\nu^\frac{1}{2}\\
&+\nu^\frac{1}{2}\delta^{-\frac{1}{2}}(\nu^\frac{1}{2}|\|\nabla u^{\alpha,\nu}\||)+K\delta^\frac{1}{2}\nu^\frac{3}{4}\alpha^{-\frac{3}{2}}(\nu^\frac{1}{2}|\|\nabla u^{\nu,\alpha}\||)^\frac{1}{2}.
\end{aligned}
\end{equation}

Recall that $\delta$ was given in \eqref{delta}. Then, in view of hypothesis \eqref{kappathing}, it follows that $\delta \to 0$ as $\alpha \to 0$. Furthermore, in light of our assumption on $\nu$ relative to $\alpha$, \eqref{rel1}, we have
\[\frac{\alpha^2}{\delta} = \mathcal{O}(\alpha^{4/5}),\]
so that both conditions in \eqref{deltadata} are satisfied as $\alpha \to 0$. Hence, as in Theorem \ref{convtheorem}, the conclusion \eqref{E2} follows.
Finally, we note that, due to hypothesis \eqref{rel1}, that $\nu = \mathcal{O}(\alpha^{6/5})$, we obtain
\[\nu^\frac{1}{2}\delta^{-\frac{1}{2}}=\nu^\frac{5}{4}\alpha^{-\frac{3}{2}}=\calO (1)\]
 as $\alpha\to 0$.

From \eqref{est61}, \eqref{I1est} and \eqref{l4bdd}, we have
\begin{equation}\label{estnew}
\begin{aligned}
&\|W^{\alpha,\nu} (t)\|^2+\alpha^2\|\nabla u^{\alpha,\nu}\|^2 +\nu\int_0^t\|\nabla u^{\alpha,\nu}(s)\|^2\dif s\le K(\|W^{\alpha,\nu} (0)\|^2 \\
&+ K\int_0^t\|W^{\alpha,\nu}(s)\|^2\dif s+\alpha^2\|\nabla u^{\alpha}_0\|^2)+K\alpha^2\int_0^t\|\nabla u^{\alpha,\nu}(s)\|^2\dif s+KT\nu^\frac{1}{2}\\
&+\nu^\frac{1}{2}\delta^{-\frac{1}{2}}(\nu^\frac{1}{2}|\|\nabla u^{\alpha,\nu}\||)+K\delta^\frac{1}{2}\nu^\frac{3}{4}\alpha^{-\frac{3}{2}}(\nu^\frac{1}{2}|\|\nabla u^{\nu,\alpha}\||)^\frac{1}{2}\\
&+KT\alpha^2+g(\delta,\alpha,u^\alpha_0, u_0),
\end{aligned}
\end{equation}
 where $g(\delta,\alpha,u^\alpha_0, u_0)$  is  as in \eqref{E2}. From \eqref{kato1}, \eqref{E2}, \eqref{estnew} and using the Gronwall lemma, we obtain \eqref{stc}, which is the convergence result we desired. This concludes the proof.

\end{proof}

In our final result we adopt the point of view that the second-grade equations are a perturbation of  the Navier-Stokes equations. Hence we consider the limit as $\nu \to 0$ under a smallness condition in  $\alpha$ relative to $\nu$.

\begin{Theorem} \label{katoNS}

Fix $T>0$ and  let $u_0\in \h^3(\Omega)\cap H$. Let  $\{u_0^\alpha\}_{\alpha>0}\subset \h^3(\o)\cap V$, be a suitable family of approximations for $u_0$.  Let $u^{\alpha,\nu} \in C([0,T];V \cap W)$ be the solution of \eqref{eq1}--\eqref{bdry1}
with initial velocity $u^\alpha_0$. Let $\bar{u} \in C([0,T];\h^3(\Omega) \cap H)
\cap C^1([0,T];\h^2(\Omega))$ be the  solution of the Euler equations \eqref{eu}--\eqref{bdry2}, with initial velocity $u_0$.  Assume that
\begin{equation} \label{rel2}
\alpha = \mathcal{O}(\nu^{3/2}), \;\;\;\mbox{ as } \;\;\;\nu \to 0.
\end{equation}
Then
$u^{\alpha,\nu}$ converges  strongly to $\bar{u}$ in $C([0,T];(L^2(\o))^2)$,  as $\nu \to 0$, if and only if
\begin{align}
& \,\, \lim_{\nu\to0}\nu\int_0^T\int_{\o_{C\nu}}|\nabla u^{\nu,\alpha}|^2\dif x\dif t=0,\label{kato11}
\end{align}
where $\o_{C\nu}$ is a $C\nu-$neighborhood of $\p\o.$
\end{Theorem}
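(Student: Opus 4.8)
The plan is to treat the two implications separately, reusing as much of the machinery of Theorems \ref{convtheorem} and \ref{katotheorem} as possible and isolating the one place where the regime $\alpha=\mathcal{O}(\nu^{3/2})$ forces a genuinely new argument. The forward implication is the easy one. If $u^{\alpha,\nu}\to\bar u$ in $C([0,T];(L^2(\o))^2)$, then, since $\|\bar u(t)\|=\|u_0\|$ is constant and, by \eqref{IC}, the right-hand side of the energy identity \eqref{enest} converges to $\|u_0\|^2$ (note $\alpha=\mathcal{O}(\nu^{3/2})\to 0$), evaluating \eqref{enest} at $t=T$ and letting $\nu\to 0$ forces $\alpha^2\|\nabla u^{\alpha,\nu}(T)\|^2+\nu\int_0^T\|\nabla u^{\alpha,\nu}\|^2\,\dif t\to 0$. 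In particular the \emph{total} dissipation vanishes, so a fortiori its restriction \eqref{kato11} to $\o_{C\nu}$ does.

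For the converse I would start, as before, from \eqref{mainestW} and the splitting \eqref{eq4}, set $\delta=C\nu$ (so that $\delta\to0$ and $\alpha^2/\delta=\mathcal{O}(\nu^2)\to0$, hence \eqref{deltadata} and \eqref{E2} hold), and keep the bound \eqref{est61} for $I_2+I_3$ together with the bound \eqref{I1est} for the first two pieces of $I_1$, retaining the corrector piece $I_{1c}=-\alpha^2\int_0^t\int_\o\partial_s\Delta u^{\alpha,\nu}\cdot u_b$. All of these remainders are $o(1)$ under \eqref{rel2}. The single step that must change is the corrector term $I_{4c}=-\nu\int_0^t\int_\o\Delta u^{\alpha,\nu}\cdot u_b$ of $I_4$ in \eqref{est9}: the route through \eqref{allint} used in Theorem \ref{katotheorem} produces the prefactor $\nu^{5/4}\alpha^{-3/2}$, which blows up once $\alpha=\mathcal{O}(\nu^{3/2})$, because the $H^3$-bound \eqref{3rdest} is far too lossy here. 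Instead I would follow Kato's original device: combine $I_{4c}$ with $I_{1c}$, write the two as $-\int_0^t\int_\o(\alpha^2\partial_s\Delta u^{\alpha,\nu}+\nu\Delta u^{\alpha,\nu})\cdot u_b$, and substitute $\alpha^2\partial_t\Delta u^{\alpha,\nu}+\nu\Delta u^{\alpha,\nu}=\partial_t u^{\alpha,\nu}+(u^{\alpha,\nu}\cdot\nabla)v+\sum_j v_j\nabla u_j^{\alpha,\nu}+\nabla p$ obtained from \eqref{eq1}, with $v=u^{\alpha,\nu}-\alpha^2\Delta u^{\alpha,\nu}$. Since $u_b$ is divergence-free with $u_b\cdot\hat n=0$, the pressure and the gradient part $\tfrac12\nabla|u^{\alpha,\nu}|^2$ integrate to zero, leaving
\begin{equation*}
\begin{aligned}
I_{1c}+I_{4c}=&-\int_0^t\!\!\int_\o\partial_s u^{\alpha,\nu}\cdot u_b\,\dif x\,\dif s-\int_0^t\!\!\int_\o(u^{\alpha,\nu}\cdot\nabla)u^{\alpha,\nu}\cdot u_b\,\dif x\,\dif s\\
&+\alpha^2\int_0^t\!\!\int_\o(u^{\alpha,\nu}\cdot\nabla)\Delta u^{\alpha,\nu}\cdot u_b\,\dif x\,\dif s+\alpha^2\int_0^t\!\!\int_\o\sum_j\Delta u_j^{\alpha,\nu}\,\nabla u_j^{\alpha,\nu}\cdot u_b\,\dif x\,\dif s.
\end{aligned}
\end{equation*}

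The first two terms are exactly Kato's. The time-derivative term is integrated by parts in $s$ and bounded by $K\delta^{1/2}=\mathcal{O}(\nu^{1/2})$ via \eqref{ub}. For the advection term I would use \eqref{zeroint} to move the derivative onto $u_b$, then invoke the thin-strip Poincar\'e inequality $\|u^{\alpha,\nu}\|_{L^2(\o_\delta)}\le K\delta\|\nabla u^{\alpha,\nu}\|_{L^2(\o_\delta)}$ (legitimate because $u^{\alpha,\nu}=0$ on $\p\o$) together with the pointwise bound $\|\nabla u_b\|_{L^\infty}\le K\delta^{-1}$; this controls it by $K\delta\nu^{-1}$ times the dissipation $\nu|\|\nabla u^{\alpha,\nu}\||^2$, hence by $\mathcal{O}(1)$ times the vanishing quantity \eqref{kato11} once $\delta=C\nu$. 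The two genuinely new $\mathcal{O}(\alpha^2)$ terms are the crux: integrating the first by parts via \eqref{zeroint} and applying Poincar\'e, both reduce to $K\alpha^2|\|\Delta u^{\alpha,\nu}\||\,|\|\nabla u^{\alpha,\nu}\||$, and inserting \eqref{allint} together with $|\|\nabla u^{\alpha,\nu}\||=\nu^{-1/2}(\nu^{1/2}|\|\nabla u^{\alpha,\nu}\||)$ gives the bound $K\,\alpha^{1/2}\nu^{-3/4}\,(\nu^{1/2}|\|\nabla u^{\alpha,\nu}\||)^{3/2}$; since $\nu^{1/2}|\|\nabla u^{\alpha,\nu}\||\le K$ by \eqref{enest}, this is $\le K\,\alpha^{1/2}\nu^{-3/4}\,(\nu^{1/2}|\|\nabla u^{\alpha,\nu}\||)$, and the prefactor $\alpha^{1/2}\nu^{-3/4}$ is $\mathcal{O}(1)$ \emph{precisely} under \eqref{rel2}.

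Collecting everything, \eqref{mainestW} yields $\|W^{\alpha,\nu}(t)\|^2\le K\int_0^t\|W^{\alpha,\nu}\|^2\,\dif s+o(1)$, where the $o(1)$ gathers the initial-data terms in \eqref{IC}, the remainders \eqref{E2}, and the $\mathcal{O}(1)$ multiples of the Kato dissipation \eqref{kato11}; Gronwall's lemma then gives \eqref{stc}. I expect the main obstacle to be exactly the pair of $\mathcal{O}(\alpha^2)$ terms above, which have no Navier--Stokes analogue: controlling them forces one to use the crude bound \eqref{3rdest} localized to $\o_\delta$ through \eqref{allint}, and it is the balance in that single inequality --- the $\alpha^{-3}$ cost of \eqref{3rdest} against the $\delta=C\nu$ localization --- that pins down both the admissible rate $\alpha=\mathcal{O}(\nu^{3/2})$ and the Kato width $C\nu$.
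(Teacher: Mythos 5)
Your proposal is correct and is, in essence, the paper's own argument: the forward implication is identical, and for the converse the paper uses the same corrector $u_b$, the same width $\delta=C\nu$, the thin-strip Poincar\'e inequality \eqref{pinq}, and the interpolation bounds \eqref{allint}, with the decisive balance being exactly the one you identify, namely that the two $\mathcal{O}(\alpha^2)$ terms $\alpha^2\int(u^{\alpha,\nu}\cdot\nabla)\Delta u^{\alpha,\nu}\cdot u_b$ and $\alpha^2\int\sum_j\Delta u^{\alpha,\nu}_j\nabla u^{\alpha,\nu}_j\cdot u_b$ are bounded (see \eqref{3rdcn1}--\eqref{3rdcn2}) by $K\alpha^{1/2}\nu^{-3/4}\bigl(\nu^{1/2}|\|\nabla u^{\alpha,\nu}\||\bigr)^{3/2}$, which is controlled precisely when $\alpha=\mathcal{O}(\nu^{3/2})$. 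The only difference is bookkeeping: instead of your substitution of \eqref{eq1} into the combined corrector pairing $I_{1c}+I_{4c}$, the paper multiplies the equation \eqref{eq3} for $W^{\alpha,\nu}$ directly by $V^{\alpha,\nu}=u^{\alpha,\nu}-(\bar{u}-u_b)$ (see \eqref{correctw}), an algebraically equivalent implementation of Kato's device that produces the same list of terms you estimate.
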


\begin{proof}
As in Theorem \ref{katotheorem}, we first suppose that the family $u^{\alpha,\nu}$ converges, as $\alpha \to 0$, to $\bar{u}$ strongly  in $C([0,T];L^2(\o))$. Then, using the fact that $\|\bar{u}(t)\| = \|u_0\|$, $0\leq t \leq T$, it follows from the energy estimate \eqref{enest}, together with the conditions \eqref{IC} for a suitable family of approximations, that \eqref{kato11} holds true.

Next, we assume that \eqref{kato11} is valid.
Once again we use the notation in the proof of Theorem \ref{convtheorem}. We start by multiplying the equation for the difference $W^{\alpha,\nu} = u^{\alpha,\nu}-\bar{u}$, \eqref{eq3}, by $V^{\alpha,\nu}=u^{\alpha,\nu}-(\bar{u}-u_b)$; we then integrate the resulting identity on $\o\times (0,t)$, for all $t\in [0,T]$. We obtain
 \begin{equation}\label{correctw}
\begin{aligned}
&\frac{1}{2}\|u^{\alpha,\nu}(t)-\bar{u}(t)\|^2 - \frac{1}{2}\|u^{\alpha}_0-u_0\|^2 +\int_0^t\int_\Omega\p_s (u^{\alpha,\nu}-\bar{u}) u_b\dif x\dif s\\
&=\int_0^t\int_{\Omega}\di \tau^{\alpha,\nu} \cdot V^{\alpha,\nu}\dif x\dif s-\int_0^t\int_\Omega ((u^{\alpha,\nu}\cdot\nabla)u^{\alpha,\nu}-(\bar{u}\cdot\nabla)\bar{u})\cdot V^{\alpha,\nu}\dif x\dif s.
\end{aligned}
\end{equation}
Consider first the third term on the left-hand-side of \eqref{correctw}. We integrate by parts with respect to $t$ and we use \eqref{ub} and \eqref{enest} to deduce that
\begin{equation}\label{lhs}
\begin{aligned}
&\left|\int_0^t\int_\Omega\p_s (u^{\alpha,\nu}-\bar{u}) u_b\dif x\dif s\right|=\\
&\left|\int_\o(u^{\alpha,\nu}(x, t)-\bar{u}(x, t)) u_b(x,t)\dif x-\int_\o(u^{\alpha,\nu}(x, 0)-\bar{u}(x, 0)) u_b(x,t)\dif x\right.\\
&\left.-\int_0^t\int_\Omega (u^{\alpha,\nu}-\bar{u}) \p_su_b\dif x\dif s\right|\\
&\le \|u^{\alpha,\nu}(t)-\bar{u}(t)\|\|u_b(t)\|+\|u^{\alpha}_0-u_0\|\|u_b(0)\|\\
&+\int_0^t\|u^{\alpha,\nu}(s)-\bar{u}(s)\|\|u_b\|\dif s\\
&\le K\delta^\frac{1}{2}+K\|u^{\alpha}_0-u_0\|+KT\delta^\frac{1}{2}.
\end{aligned}
\end{equation}
 Using repeatedly identity \eqref{zeroint}  we obtain, for the second term on the right-hand-side  of \eqref{correctw},
 \begin{equation}\label{rhdsc}
 \begin{aligned}
 &\int_0^t\int_\Omega ((u^{\alpha,\nu}\cdot\nabla)u^{\alpha,\nu}-(\bar{u}\cdot\nabla)\bar{u})\cdot V^{\alpha,\nu}\dif x\dif s\le \int_0^t\int_\Omega |u^{\alpha,\nu}-\bar{u}|^2|\nabla\bar{u}|\dif s\dif x\\
 &+\int_0^t\int_\Omega ((u^{\alpha,\nu}\cdot\nabla)u^{\alpha,\nu}-(\bar{u}\cdot\nabla)\bar{u})\cdot u_b\dif x\dif s\\
 &\le K\int_0^t\|u^{\alpha,\nu}(s)-\bar{u}(s)\|^2\dif s+\int_0^t\int_\Omega |(u^{\alpha,\nu}\cdot\nabla)u^{\alpha,\nu}\cdot  u_b|+K\| u_b\|.
 \end{aligned}
 \end{equation}
 We make use of a version of the Poincar\'{e} inequality, valid for functions in $u \in H^1(\Omega_{\delta})$ such that $u = 0$ on $\partial\Omega$; see e.g. Lemma 3 in \cite{ILN08}. This inequality is given by
 \begin{equation}\label{pinq}
 \|u\|_{L^2(\o_\delta)}\le K\delta\|\nabla u\|_{L^2(\o_\delta)}.
 \end{equation}
 It follows from \eqref{rhdsc} and \eqref{ub}, together with \eqref{pinq}, that
 \begin{equation}\label{rhse}
 \begin{aligned}
 &\int_0^t\int_\Omega ((u^{\alpha,\nu}\cdot\nabla)u^{\alpha,\nu}-(\bar{u}\cdot\nabla)\bar{u})\cdot V^{\alpha,\nu}\dif x\dif s\le K\int_0^t\|u^{\alpha,\nu}(s)-\bar{u}(s)\|^2\dif s\\
 &+\int_0^t\|u^{\alpha,\nu}\|_{L^2(\o_\delta)}\|\nabla u^{\alpha,\nu}\|_{L^2(\o_\delta)} |u_b|_\infty\dif s+K\delta^\frac{1}{2}\\
 &\le K\int_0^t\|u^{\alpha,\nu}(s)-\bar{u}(s)\|^2\dif s+K\delta\int_0^t\|\nabla u^{\alpha,\nu}\|^2_{L^2(\o_\delta)}+K\delta^\frac{1}{2}.
 \end{aligned}
 \end{equation}
Next, we  will treat the first term on the right-hand-side of \eqref{correctw}, which we rewrite as
 \begin{equation}\label{rhsf}
\begin{aligned}
\int_0^t\int_{\Omega}\di \tau^{\alpha,\nu}\cdot V^{\alpha,\nu}\dif x\dif s&=-\int_0^t\int_\Omega\alpha^2\p_s\nabla u^{\alpha,\nu} : \nabla V^{\alpha,\nu}\dif x\dif s\\
&-\alpha^2\int_0^t\int_\Omega(u^{\alpha,\nu}\cdot\nabla)\Delta u^{\alpha,\nu}\cdot (\bar{u}-u_b)\dif x\dif s\\
&-\alpha^2\int_0^t\int_\Omega\sum_{j=1}^2(\Delta u^{\alpha,\nu}_j)\nabla u_j^{\alpha,\nu}\cdot (\bar{u}-u_b)\dif x\dif s
\\
&-\nu\int_0^t\int_\Omega\nabla u^{\alpha,\nu}: \nabla V^{\alpha,\nu}\dif x\dif s\\
&=:J_1(t)+J_2(t)+J_3(t)+J_4(t).
\end{aligned}
\end{equation}
Now we need to estimate the terms above one by one.
We begin with $J_1$, which we integrate by parts with respect to the time variable to obtain:
\begin{equation}\label{J1est}
\begin{aligned}
J_1&=-\int_0^t\int_\Omega\alpha^2\p_s\nabla u^{\alpha,\nu} : \nabla (u^{\alpha,\nu}-\bar{u}+u_b)\dif x\dif s\\
&=-\frac{1}{2}\alpha^2\|\nabla u^{\alpha,\nu}(t)\|^2+\frac{1}{2}\alpha^2\|\nabla u^{\alpha}_0\|^2\\
&-\alpha^2\int_0^t\int_{\Omega}\nabla u^{\alpha,\nu}\nabla\p_s\bar{u}\dif x\dif s+\alpha^2\int_{\Omega}\nabla u^{\alpha}_0 : \nabla u_0\dif x\\
&+\alpha^2\int_0^t\int_{\Omega}\nabla u^{\alpha,\nu} : \nabla\p_s u_b\dif x\dif s-\alpha^2\int_{\Omega}\nabla u^{\alpha}_0 : \nabla u_b(0)\dif x\\
&\le -\frac{1}{2}\alpha^2\|\nabla u^{\alpha,\nu}(t)\|^2+KT\alpha+KT\alpha\delta^{-\frac{1}{2}}+\bar{g}(u_0^\alpha,u_0,u_b(0)),
\end{aligned}
\end{equation}
where
\begin{equation}\label{g2form}
\begin{aligned}
\bar{g}(u_0^\alpha,u_0,u_b(0))&=\frac{\alpha^2}{2}\|\nabla u_0^\alpha\|^2+\alpha^2\int_{\Omega}(\nabla u^\alpha_0:\nabla u_0)\dif x-\alpha^2\int_{\Omega}(\nabla u^\alpha_0:\nabla u_b(0))\dif x.
\end{aligned}
\end{equation}

Next, we add $J_2$ and $J_3$. We find, easily, that

\begin{align}
&J_2(t)+J_3(t)=I_2(t)+I_3(t)\nonumber\\
&+\alpha^2\int_0^t\int_\Omega\left[(u^{\alpha,\nu}\cdot\nabla)\Delta u^{\alpha,\nu}u_b+\sum_{j=1}^2\Delta u^{\alpha,\nu}_j\nabla u_j^{\alpha,\nu}\cdot u_b\right]\dif x\dif s\nonumber\\
&\le K\alpha^2\int_0^t\|\nabla u^{\alpha,\nu}(s)\|^2\dif s+KT\alpha^2\nonumber\\
&+\alpha^2\int_0^t\int_\Omega\left[(u^{\alpha,\nu}\cdot\nabla)\Delta u^{\alpha,\nu}u_b+\sum_{j=1}^2\Delta u^{\alpha,\nu}_j\nabla u_j^{\alpha,\nu}\cdot u_b\right]\dif x\dif s,\label{J23est}
\end{align}
where $I_2$ and $I_3$ are defined in \eqref{eq4}. Using the H\"{o}lder inequality and \eqref{ub}, one can infer that
\begin{equation}\label{J4est}
\begin{aligned}
&J_4=-\nu\int_{0}^{t}\int_{\Omega}|\nabla u^{\alpha,\nu}|^2\dif x\dif s+\nu\int_{0}^{t}\int_{\Omega}\nabla u^{\alpha,\nu}\nabla\bar{u}\dif x\dif s\\
&-\nu\int_{0}^{t}\int_{\Omega}\nabla u^{\alpha,\nu}\nabla u_b\dif x\dif s\\
&\le-\nu\int_{0}^{t}\|\nabla u^{\alpha,\nu}\|^2\dif s+\nu\int_0^t\|\nabla u^{\alpha,\nu}\|\|\nabla \bar{u}\|\dif s\\
&+\nu\int_0^t\|\nabla u^{\alpha,\nu}\|_{L^2(\Omega_\delta)}\|\nabla u_b\|_{L^2(\Omega_\delta)}\dif s\\
&\le-\nu\int_{0}^{t}\|\nabla u^{\alpha,\nu}\|^2\dif s+KT\nu^{\frac{1}{2}}+\frac{\nu^{1/2}}{\delta^{1/2}}\left(\nu^{\frac{1}{2}}\int_0^t\|\nabla u^{\alpha,\nu}\|_{L^2(\Omega_\delta)}\right).
\end{aligned}
\end{equation}
From \eqref{3rdest}, \eqref{gub} and \eqref{pinq}, we deduce that
\begin{equation}\label{3rdcn}
\begin{aligned}
&\alpha^2\int_0^t\int_\Omega[(u^{\alpha,\nu}\cdot\nabla)\Delta u^{\alpha,\nu}]\cdot u_b \dif x\dif s=-\alpha^2\int_0^t\int_\Omega\Delta u^{\alpha,\nu}\cdot [ (u^{\alpha,\nu}\cdot\nabla) u_b]\dif x\dif s\\
&\le \alpha^2\|\nabla u_b\|_{L^{\infty}}\int_0^t\|u^{\alpha,\nu}(s)\|_{L^2(\o_\delta)}\|\Delta u^{\alpha,\nu}\|_{L^2(\o_\delta)}\dif s\\
&\le K\int_0^t\|\nabla u^{\alpha,\nu}(s)\|_{L^2(\o_\delta)}(\alpha^2\|\Delta u^{\alpha,\nu}\|_{L^2(\o_\delta)})\dif s.
\end{aligned}
\end{equation}
From \eqref{3rdcn} together with \eqref{allint} it follows that
\begin{equation}\label{3rdcn1}
\begin{aligned}
\left|\alpha^2\int_0^t\int_\Omega(u^{\alpha,\nu}\cdot\nabla)\Delta u^{\alpha,\nu}\cdot (u_b)\dif x\dif s\right|\le K\frac{\alpha^\frac{1}{2}}{\nu^\frac{3}{4}}(\nu^\frac{1}{2}|\|\nabla u^{\alpha,\nu}\||)^\frac{3}{2}.
\end{aligned}
\end{equation}
Similarly, it is easy to obtain
 \begin{equation}\label{3rdcn2}
\begin{aligned}
\left|\alpha^2\int_0^t\int_\Omega\sum_j^2\Delta u^{\alpha,\nu}_j\nabla u_j\cdot (u_b)\dif x\dif s\right|\le K\frac{\alpha^\frac{1}{2}}{\nu^\frac{3}{4}}(\nu^\frac{1}{2}|\|\nabla u^{\alpha,\nu}\||)^\frac{3}{2}.
\end{aligned}
\end{equation}
We put together \eqref{correctw} with the estimates in \eqref{lhs}, \eqref{rhse}, \eqref{J1est},\eqref{J23est}, \eqref{J4est}, \eqref{3rdcn1}, \eqref{3rdcn2} and \eqref{g2form}, and we find
\begin{equation}\label{diff1}
\begin{aligned}
&\|u^{\alpha,\nu} (t)-\bar{u}(t)\|^2+\alpha^2\|\nabla u^{\alpha,\nu}\|^2 +\nu\int_0^t\|\nabla u^{\alpha,\nu}(s)\|^2\dif s\\
&\le K(\|u^{\alpha} _0-u_0\|^2+ \|u^{\alpha}_0-u_0\|)\\
&+K\int_0^t\|u^{\alpha,\nu}(s)-\bar{u}(s)\|^2\dif s+K\delta\int_0^t\|\nabla u^{\alpha,\nu}\|^2_{L^2(\o_\delta)}+ K\alpha + K\delta^\frac{1}{2}\\
&+K\alpha^2\int_0^t\|\nabla u^{\alpha,\nu}(s)\|^2\dif s+K\alpha^\frac{1}{2}+K\alpha\delta^{-\frac{1}{2}}+K\nu^\frac{1}{2}\\
&+\frac{\nu^\frac{1}{2}}{\delta^\frac{1}{2}}(\nu^\frac{1}{2}|\|\nabla u^{\alpha,\nu}\||)+K\frac{\alpha^\frac{1}{2}}{\nu^\frac{3}{4}}(\nu^\frac{1}{2}|\|\nabla u^{\alpha,\nu}\||)^\frac{3}{2}+\bar{g}(u_0^\alpha,u_0,u_b(0)).
\end{aligned}
\end{equation}

We apply the  Gronwall  lemma to \eqref{diff1}, we take $\delta=C\nu$ and we use our hypothesis  \eqref{rel2} and\eqref{kato11},
to conclude that
\begin{equation*}\label{strconv}
\sup_{t\in (0,T)}(\|u^{\alpha,\nu}(t)-\bar{u}(t)\|^2+\alpha^2\|\nabla u^{\alpha,\nu}(t)\|^2)+\nu\int_0^T\|\nabla u^{\alpha,\nu}\|^2\dif t\to 0,
\end{equation*}
as $\nu\to 0.$
 \end{proof}

    We finish with a few concluding remarks. Our goal here was to probe the relation between the vanishing viscosity limit for the Navier-Stokes equations and the vanishing-$\alpha$ limit for the Euler-$\alpha$ system, exploring the diverse nature of the boundary layer for these problems by using the second-grade fluid system as an interpolant. What we found is that there appears to be a subtle and complicated change in behavior as viscosity and $\alpha$ vanish at different relative rates. Near Euler-$\alpha$ there is a region in $(\alpha,\nu)$ space where behavior similar to Euler-$\alpha$ is found, and further along, a region where vanishing viscosity limit is controlled by behavior of the fluid in a suitable thin region around the boundary.  In addition, near Navier-Stokes there is a region where the behavior appears similar to Navier-Stokes as well. There is also an intermediate region where we found no precise characterization of the Euler limit in the spirit of Kato's criterion. In this intermediate region we could formulate criteria for convergence in several ways, but we found no equivalence result.

What is the difference in the boundary layer problem for Euler-$\alpha$ and Navier-Stokes? Both situations are associated with a thin region of intense shear near the boundary, caused by discrepancy between the boundary conditions of the approximation and of the limit. In inviscid fluid flows, thin regions of intense shear are subject to the Kelvin-Helmholtz instability, which is the source of much of the difficulty in understanding boundary layers. 
Most likely, the mechanism of inhibiting Kelvin-Helmholtz instability by Euler-$\alpha$ and Navier-Stokes is quite different; understanding precisely how would be a very interesting topic for future investigation.


	It would be interesting to examine this problem from an asymptotic analysis point-of-view, examining the changing nature of the boundary layer equations for the different relative ways in which $\alpha$ and $\nu$ may vanish.  This could also lead to estimating the error terms in the
situations where convergence was established. Other natural open problems include requiring less smoothness from the underlying Euler solution, for example, looking at the case of bounded initial vorticity, higher dimensions and other $\alpha$ models.

\noindent{\bf Acknowledgements}

  E.S.T. is thankful to the  kind hospitality of the Universidade Federal do Rio de Janeiro (UFRJ) and Instituto Nacional de Matem\' {a}tica  Pura  e Aplicada (IMPA), where part of this work was completed.
The work of M.C.L.F. is partially supported by CNPq grant \# 303089 / 2010-5.
The work of H.J.N.L. is supported in part by CNPq grant \# 306331 / 2010-1 and FAPERJ grant \# E-26/103.197/2012.
The work of  E.S.T.  is supported in part by the NSF grants  DMS-1009950, DMS-1109640 and DMS-1109645. Also by CNPq-CsF grant \# 401615/2012-0, through the program Ci\^encia sem Fronteiras.
The work of A.B.Z. is supported in part by the CNPq-CsF grant \# 402694/2012-0, by the National Natural Science Foundation of China (11201411) and Jiangxi Provincial Natural Science Foundation of China (20122BAB211004), Higher Education  Teacher Training Foundation of Jiangxi Provincial Education Department  and Youth Innovation Group of Applied Mathematics  in Yichun University.

\end{document}